\theoremstyle{plain}
\newtheorem{theorem}{Theorem} 
\newtheorem{corollary}[theorem]{Corollary}
\newtheorem{lemma}[theorem]{Lemma}
\theoremstyle{definition}
\newtheorem{definition}{Definition}
\newtheorem{remark}[definition]{Remark}
\newcommand{\abs}[1]{\left\vert #1\right\vert }
\newcommand\egdef{{\,{\buildrel\rm def\over=}\,}}
\newcommand\bg{\medskip\nobreak}
\newcommand{\bbinom}[2]{\left\langle{{#1}\atop{#2}}\right\rangle}
\numberwithin{equation}{section}
\title[ New identities obtained from Gegenbauer series expansion]
{{\large   New identities obtained from Gegenbauer series expansion}}
\author[Omran Kouba]{Omran Kouba$^\dag$}
\address{Department of Mathematics \\
	Higher Institute for Applied Sciences and Technology\\
	P.O. Box 31983, Damascus, Syria.}
\email{omran\_kouba@hiast.edu.sy}
\keywords{Gegenbauer polynomials, Legendre polynomials, Chebyshev polynomials of the first and second kind, Gamma function, rising factorial.}
\subjclass[2020]{33C20, 33C45, 42C10.}
\thanks{$^\dag$ Department of Mathematics, Higher Institute for Applied Sciences and Technology.}
\begin{document}
	
	\date{\today}
	\begin{abstract}
		Using the expansion in a Fourier-Gegenbauer series, we prove several identities that extend and generalize known results. In particular, it is proved among other results, that
		\begin{equation*}
			\sum_{n=0}^\infty\frac{1}{4^n}\binom{2n}{n}\frac{z-2n}{\binom{z-1/2}{n}}\binom{z}{n}^3
			=\frac{\tan(\pi z)}{\pi}
		\end{equation*}
	for all complex numbers $z$ such that $\Re(z)>-\frac{1}{2}$ and  $z\notin\frac{1}{2}+\mathbb{Z}$.
	\end{abstract}

\maketitle
\section{Introduction and Notation}
For a complex number $a$ and a nonnegative integer $n$ we define the \textit{rising factorial} $(a)_n$ as follows.
\begin{equation}
	(a)_n\egdef\prod_{0\le j<n}(a+j)=\frac{\Gamma(a+n)}{\Gamma(a)}
\end{equation}
where $\Gamma$ is the well-known Eulerian Gamma function (where in the last equality we assume that $a$ is not $0$ or a negative integer). We will also introduce a notation for $(a)_n/n!$ namely
\begin{equation}
\bbinom{a}{n}\egdef\frac{(a)_n}{n!}=(-1)^n\binom{-a}{n}=\binom{a+n-1}{n}.
\end{equation}

\bg
The Gegenbauer (or ultraspherical) polynomials $(C^{(\lambda)}_n)_{n\ge0}$ can be defined by the generating function
\begin{equation}
	\frac{1}{(1-2xz+z^2)^{\lambda}}=\sum_{n=0}^\infty C^{(\lambda)}_n(x) z^n, \quad -1/2<\lambda\neq 0, \, |z|<1.
\end{equation}
They are orthogonal polynomials on $[-1,1]$ with respect to the weight function $\omega_\lambda(x)=(1-x^2)^{\lambda-1/2}$. 
Furthermore,
\begin{align}\label{eqNorm}
	\Vert C_n^{(\lambda)}\Vert_2^2&=
	\int_{-1}^1	\left(C^{(\lambda)}_n(x)\right)^2\omega_\lambda(x)\,dx,\nonumber\\
	&=\frac{\sqrt{\pi}\,\Gamma(\lambda+1/2)}{\Gamma(\lambda)}\cdot\frac{1}{n+\lambda}\bbinom{2\lambda}{n}.
\end{align}

Let $f$ be a measurable function on $(-1,1)$ such that
the integral $\int_{-1}^1\omega_\lambda(x)\abs{f(x)}\,dx$ is convergent,
(that is $f$ is integrable with respect to the measure $\omega_\lambda(x)dx$),
we may define the $\lambda$-Gegenbauer coefficients $a_n(f)$ by the formula
\begin{equation}
	a_n(f)=\frac{1}{\Vert C_n^{(\lambda)}\Vert_2^2}\int_{-1}^1f(x) C_n^{(\lambda)}(x)\omega_\lambda(x)\,dx,\qquad n=0,1,2,\ldots.
\end{equation}
Then we may consider the formal expansion of $f$ in a  $\lambda$-Gegenbauer series
\begin{equation}
	S(f)=\sum_{n=0}^\infty a_n(f)C_n^{(\lambda)},
\end{equation}
and the partial sums
\begin{equation}
	S_n(f)=\sum_{k=0}^n a_k(f)C_k^{(\lambda)}.
\end{equation}
In order to settle the question of convergence of these series to the function $f$ we will use two results. First, the ``equiconvergence theorem''  \cite[9.1.2]{Szego} that we recall the statement specialized to the case of $\lambda$-Gegenbauer polynomials for the convenience of the reader.

\begin{theorem}[Szeg\"o] \label{Szego}
	Let $f$ be Lebesgue-measurable in $[-1,1]$, and let the integrals
	\begin{equation}
		\int_{-1}^1(1-x^2)^{\lambda-1/2}\abs{f(x)}\,dx,\quad
		\int_{-1}^1(1-x^2)^{(\lambda-1)/2}\abs{f(x)}\,dx
	\end{equation}
	exist. If $S_n(f)$ denotes the $n$th partial sum of the expansion of $f$ in  $\lambda$-Gegenbauer series, and $\tilde{S}_n(\tilde{f})$ is the $n$th partial sum of the Fourier (cosine) series of $\theta\mapsto \tilde{f}(\theta)\egdef\abs{\sin\theta}^{\lambda}f(\cos\theta)	$ then
	for $x\in(-1,1)$ 
	\begin{equation}
		\lim_{n\to\infty}\left(S_n(f)(x)-(1-x^2)^{-\lambda/2}\tilde{S}_n(\tilde{f})(\arccos(x))\right)=0
	\end{equation}
	Moreover, the convergence is uniform in $[-1+\epsilon,1-\epsilon]$, where $\epsilon$ is a fixed positive number, $\epsilon<1$.
\end{theorem}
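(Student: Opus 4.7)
The overall plan is to compare the two partial sums through their integral representations, showing that the Gegenbauer reproducing kernel coincides asymptotically with the classical Dirichlet kernel after the substitution $x=\cos\theta$, $y=\cos\phi$. The Christoffel--Darboux formula yields
\begin{equation*}
S_n(f)(x)=\int_{-1}^1 K_n^{(\lambda)}(x,y)\,f(y)\,\omega_\lambda(y)\,dy,\quad K_n^{(\lambda)}(x,y)=\sum_{k=0}^n\frac{C_k^{(\lambda)}(x)\,C_k^{(\lambda)}(y)}{\Vert C_k^{(\lambda)}\Vert_2^2},
\end{equation*}
while $\tilde S_n(\tilde f)$ is the convolution of $\tilde f$ (extended evenly to $[-\pi,\pi]$) with the Dirichlet kernel $D_n$. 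The equiconvergence statement is then equivalent to showing that $K_n^{(\lambda)}(\cos\theta,\cos\phi)$ behaves like $(\sin\theta)^{-\lambda}(\sin\phi)^{-\lambda}D_n(\theta-\phi)$, modulo terms whose integral against $f\,\omega_\lambda$ vanishes in the limit; the factor $(\sin\phi)^{-\lambda}$ is absorbed by the weight together with the defining relation $\tilde f(\phi)=(\sin\phi)^\lambda f(\cos\phi)$.

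The analytic engine is the Darboux-type asymptotic expansion
\begin{equation*}
C_k^{(\lambda)}(\cos\theta)=\frac{2\,\Gamma(k+\lambda)}{\Gamma(\lambda)\,k!\,(2\sin\theta)^\lambda}\,\cos\bigl((k+\lambda)\theta-\tfrac{\pi\lambda}{2}\bigr)+O(k^{\lambda-2}),
\end{equation*}
uniform on compact subsets of $(0,\pi)$, combined with the leading behaviour $\Vert C_k^{(\lambda)}\Vert_2^2\sim c_\lambda\,k^{2\lambda-2}$ extracted from \eqref{eqNorm}. A product-to-sum identity then reduces each summand of the kernel to
\begin{equation*}
\frac{\cos\bigl(k(\theta-\phi)\bigr)+\cos\bigl(k(\theta+\phi)-\pi\lambda\bigr)}{2(\sin\theta)^\lambda(\sin\phi)^\lambda},
\end{equation*}
plus a remainder of order $O(k^{-1})$. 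Summing over $k=0,\dots,n$ turns the first numerator into a Dirichlet kernel in $\theta-\phi$, while the second, a non-resonant trigonometric sum, contributes $o(1)$ after integration against $f$ via a Riemann--Lebesgue type argument.

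The main obstacle is that the Darboux expansion degenerates as $\theta$ or $\phi$ approaches $0$ or $\pi$, so the analysis must be localised. I would split the $y$-integral at $[-1+\delta,1-\delta]$ versus its complement. On the inner region all estimates are uniform and the remainder terms yield $o(1)$ under the first integrability hypothesis $\int(1-y^2)^{\lambda-1/2}|f|<\infty$. On the outer region one must rely on a coarser global bound of the form $|C_k^{(\lambda)}(\cos\phi)|\le C\,k^{\lambda-1}(\sin\phi)^{-\lambda}$, and this is precisely where the sharper second hypothesis $\int(1-y^2)^{(\lambda-1)/2}|f|<\infty$ is indispensable: it supplies exactly the extra factor $(\sin\phi)^{-\lambda}$ needed to absorb the endpoint singularity of the kernel, a compensation that the first hypothesis alone cannot provide. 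Once the comparison is established pointwise, uniform convergence on $[-1+\epsilon,1-\epsilon]$ follows directly from the uniformity of the asymptotic expansion on the compact $\theta$-interval $[\arccos(1-\epsilon),\arccos(-1+\epsilon)]$.
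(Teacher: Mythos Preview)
The paper does not prove this theorem at all: it is quoted verbatim from Szeg\H{o}'s monograph \cite[9.1.2]{Szego} as a known tool, and the author explicitly says he is merely ``recall[ing] the statement specialized to the case of $\lambda$-Gegenbauer polynomials for the convenience of the reader.'' There is therefore nothing in the paper to compare your argument against.

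That said, your sketch is broadly faithful to the strategy Szeg\H{o} himself uses in Chapter~9: write $S_n(f)$ via the Christoffel--Darboux kernel, insert the Darboux asymptotic for $C_k^{(\lambda)}(\cos\theta)$ uniformly on interior compacta, recognise the Dirichlet kernel in the leading term, and control the boundary contribution with the second integrability hypothesis. One point to be more careful about: the remainder after replacing each summand by its asymptotic form is $O(k^{-1})$ \emph{termwise}, so summing $n$ of them gives $O(\log n)$, not $o(1)$; Szeg\H{o} handles this by working instead with the closed Christoffel--Darboux form of $K_n^{(\lambda)}$ (a single ratio involving $C_{n+1}^{(\lambda)}$ and $C_n^{(\lambda)}$) and applying the asymptotic only once, which avoids accumulating errors. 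Your outline would need that refinement to go through.
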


Then, the above theorem is combined with the next theorem \cite[Chapter II, (8.14)]{Zygmund}, (that we also recall the statement), to conclude the convergence of the $\lambda$-Gegenbauer series expansion.

\begin{theorem}[Zygmund]\label{Zygmund}
	Suppose that $f$ is integrable, $2\pi$-periodic, and of bounded variation in an interval $I$. Then the Fourier series of $f$ converges to ${\frac{1}{2}[f(x^{+})+f(x^{-})]}$ at every point $x$ interior to $I$. If, in addition, $f$ is continuous in $I$, the convergence is uniform in any interval interior to $I$. 
\end{theorem}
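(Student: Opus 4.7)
The plan is to establish this via the classical Dirichlet--Jordan criterion. I would begin by expressing the $n$-th partial sum through the Dirichlet kernel:
\[
S_n(f)(x)=\frac{1}{\pi}\int_{0}^{\pi}\bigl[f(x+t)+f(x-t)\bigr]D_n(t)\,dt,\qquad D_n(t)=\frac{\sin((n+\frac12)t)}{2\sin(t/2)},
\]
which satisfies $\frac{2}{\pi}\int_0^\pi D_n(t)\,dt=1$. Setting $s=\frac12[f(x^{+})+f(x^{-})]$ and $\phi_x(t)=f(x+t)+f(x-t)-2s$, so that $\phi_x(0^{+})=0$, the formula rewrites as
\[
S_n(f)(x)-s=\frac{1}{\pi}\int_0^\pi \phi_x(t)\,D_n(t)\,dt,
\]
and the theorem reduces to showing that this integral tends to $0$.

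Next, I would fix $\delta>0$ with $[x-\delta,x+\delta]\subset I$. Then $\phi_x$ is of bounded variation on $[0,\delta]$, and the Jordan decomposition yields $\phi_x=\psi_1-\psi_2$ with each $\psi_j$ nondecreasing on $[0,\delta]$ and $\psi_j(0^{+})=0$ (one may take $\psi_1(t)+\psi_2(t)=V_0^t(\phi_x)$). The contribution from $[\delta,\pi]$ tends to $0$ by the Riemann--Lebesgue lemma, since $1/(2\sin(t/2))$ is bounded on $[\delta,\pi]$ and $f$ is integrable. It thus suffices to prove, for every nondecreasing $\psi$ on $[0,\delta]$ with $\psi(0^{+})=0$, that
\[
\int_0^\delta \psi(t)\,D_n(t)\,dt\longrightarrow 0.
\]

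For this final step, given $\varepsilon>0$, pick $\eta\in(0,\delta]$ with $\psi(\eta)<\varepsilon$. On $[\eta,\delta]$ the Riemann--Lebesgue lemma applies again. On $[0,\eta]$, the second mean value theorem for integrals applied to the monotone $\psi$ furnishes some $c\in[0,\eta]$ with
\[
\int_0^\eta \psi(t)D_n(t)\,dt=\psi(\eta)\int_c^\eta D_n(t)\,dt,
\]
and the uniform bound $M:=\sup_n\sup_{0\le a\le b\le\pi}\bigl\vert\int_a^b D_n(t)\,dt\bigr\vert<\infty$ then gives $\bigl\vert\int_0^\eta\psi(t)D_n(t)\,dt\bigr\vert\le M\varepsilon$, which completes the pointwise statement.

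For the uniform assertion, continuity of $f$ on $I$ makes the total variation function $\Phi(x):=V_a^x(f)$ continuous, hence uniformly continuous on any compact $J\subset\mathrm{int}(I)$; consequently $V_0^\eta(\phi_x)=\Phi(x+\eta)-\Phi(x-\eta)\to 0$ uniformly for $x\in J$, so the choice of $\eta$ above can be made uniform in $x$, and the remaining estimates are already uniform. I expect the main technical hurdle to be the uniform bound $M<\infty$: through the expansion $1/(2\sin(t/2))-1/t=O(t)$ on $(0,\pi]$ it reduces to the classical boundedness of $\int_A^B\sin(u)/u\,du$ for $0\le A\le B$, after which everything else is careful bookkeeping of the three pieces $[0,\eta]$, $[\eta,\delta]$, $[\delta,\pi]$.
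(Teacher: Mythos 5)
The paper does not prove this statement at all: it is the classical Dirichlet--Jordan test, quoted verbatim (with attribution) from Zygmund's \emph{Trigonometric Series} \cite[Chapter II, (8.14)]{Zygmund} and used as a black box to justify the convergence claims in Theorems \ref{T3} and \ref{T4}. So there is nothing in the paper to compare your argument against; what you have written is essentially the standard textbook proof, and it is the same one found in Zygmund's book. Your sketch is sound: the reduction to $\frac{1}{\pi}\int_0^\pi\phi_x(t)D_n(t)\,dt$, the Jordan decomposition into nondecreasing pieces vanishing at $0^+$ (taking $\psi_j=\frac12(V_0^t(\phi_x)\pm\phi_x)$ so that $0\le\psi_j(t)\le V_0^t(\phi_x)$), the second mean value theorem on $[0,\eta]$, and the uniform bound on $\int_a^b D_n$ via comparison with $\int\sin(u)/u\,du$ are all correct and are exactly the classical ingredients. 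The one place where you are too quick is the uniform statement: besides making the choice of $\eta$ uniform via continuity of the variation function (which you do correctly), you must also check that the Riemann--Lebesgue contributions from $[\eta,\delta]$ and $[\delta,\pi]$ tend to $0$ \emph{uniformly} in $x\in J$; this is not automatic from the pointwise lemma. It follows, for instance, because $x\mapsto f(x+\cdot)$ is $L^1$-continuous, so $\{t\mapsto\phi_x(t)/(2\sin(t/2))\cdot\chi_{[\eta,\pi]}(t):x\in J\}$ is a compact subset of $L^1$ and the Riemann--Lebesgue lemma holds uniformly on $L^1$-compact families (alternatively, one applies the second mean value theorem on $[\eta,\delta]$ as well and absorbs the rest into Fourier coefficients of the fixed function $f$). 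With that point filled in, the proof is complete.
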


\bg
When $\lambda=1/2$, $C_n^{(1/2)}(x)=P_n(x)$ the Legendre polynomial of degree $n$. Legendre polynomials  play an important role in our development  because most of the formulas take an aesthetically beautiful appearance in this case. 

\bg
Chebyshev polynomials  $(T_n)_{n\ge0}$ and  $(U_n)_{n\ge0}$ of the first and second kind, respectively, are also special types of Gegenbauer polynomials, but we prefer to define them by the functional identities:
\begin{equation}
	T_n(\cos\theta)=\cos\theta,\quad U_n(\cos\theta)\sin\theta=\sin((n+1)\theta),
	\quad(\theta\in\mathbb{R}).
\end{equation}

For detailed information on these polynomials we refer the reader to Ismail \cite[Chapter 4]{I},  Milovanovi\'c et al. \cite[Chapter 1.2]{MMR},
\cite[Chapter 18]{Nist} and the references cited therein. 

\bg

In Theorem \ref{T3} and  Theorem \ref{T4}, which are the cornerstones of our investigation, it is proved that, for $\abs{x}<1$, one has
\begin{align}
\frac{ U_m(x)}{(1-x^2)^{\lambda-1}}&=
\frac{(m+1)\sqrt{\pi}\,\Gamma(\lambda)}{2\Gamma(\lambda+1/2)}
\sum_{n=0}^\infty\frac{\lambda+m+2n}{m+n+1} \frac{\bbinom{\lambda-1}{n}\bbinom{\lambda}{m+n}}{\bbinom{2\lambda}{m+2n}}
C_{m+2n}^{(\lambda)}(x)\label{eqU}\\
\noalign{\noindent\text{and}}
\frac{T_m(x)}{(1-x^2)^\lambda}&=\frac{\sqrt{\pi}\,\Gamma(\lambda)}{\Gamma(\lambda+1/2)}
	\sum_{n=0}^\infty(\lambda+m+2n) \frac{ \bbinom{\lambda}{n}\bbinom{\lambda}{m+n}}{\bbinom{2\lambda}{m+2n}} C_{m+2n}^{(\lambda)}(x)\label{eqT}
\end{align}
where $\lambda\in(-1/2,3)$ in \eqref{eqU} and $\lambda\in(-1/2,1)$ in \eqref{eqT}. The particular case corresponding to $m=0$ and $\lambda=1/2$ can be found in the literature. Levrie's paper \cite{Levrie} used this expansion to find formulas for $1/\pi$ and $1/\pi^2$, this was considered again by Carantini and D'Aurizio in \cite{Cantarini}, and more recently by Chen \cite{chen}. Let us emphasize on the statement of convergence. Note that
\begin{equation*}
	\lim_{n\to\infty}	(\lambda+m+2n) \frac{ \bbinom{\lambda}{n}\bbinom{\lambda}{m+n}}{\bbinom{2\lambda}{m+2n}}=\frac{\Gamma(2\lambda)}{(\Gamma(\lambda))^2}.
\end{equation*}
So, it is not straightforward to conclude about the convergence in \eqref{eqT}. In addition, the $L^2$ theory does not apply in the whole domain of $\lambda$. In particular, when $\lambda=1/2$ the function $x\mapsto(1-x^2)^{-1/2}$ is {\it not} square-integrable on $(-1,1)$.

Using \eqref{eqU} and \eqref{eqT}, several series expansions of known functions are obtained using Parseval's identity. In particular, one appealing formula is obtained, (Corollary \ref{T12}),
		\begin{equation}
	\sum_{n=0}^\infty\frac{1}{4^n}\binom{2n}{n}\frac{z-2n}{\binom{z-1/2}{n}}\binom{z}{n}^3
	=\frac{\tan(\pi z)}{\pi}
\end{equation}
which is valid for $\Re(z)>-1/2$ and $z\notin 1/2+\mathbb{Z}$.

\bg
The paper is organized as follows. In section \ref{sec2}, two functions related to Chebyshev polynomials are expanded in a $\lambda$-Gegenbauer series, and the convergence of the corresponding series is studied. In section \ref{sec3}, when the considered functions are square-integrable with respect to weighted measure $\omega_\lambda\,dx$ Parseval's theorem is applied. In section \ref{sec4} many results and applications are demonstrated. We were not exhaustive in our search here because we wanted to keep the length of the paper reasonable.
 
\section{Two expansions in terms of Gegenbauer polynomials}\label{sec2}

Let $\lambda\in(-1/2,3)$ with $\lambda\ne0$, and let $m$ be a positive integer, we define the function $f_{\lambda,m}$ on
$(-1,1)$ by
\begin{equation}\label{eqf}
	f_{\lambda,m}(x)=\frac{ U_m(x)}{(1-x^2)^{\lambda-1}}.
\end{equation}
where $U_m$ is the Chebyshev polynomial of the second kind. 
The function $f_{\lambda,m}$ belongs to $L^1((-1,1),\omega_\lambda\,dx)$. Our objective is to expand $f_{\lambda,m}$ in a $\lambda$-Gegenbauer series.

\begin{theorem}\label{T3}
	Let $m$ be a nonnegative integer and $\lambda\in(-1/2,3)$ with $\lambda\ne0$. Then for all $x\in(-1,1)$ the following holds
	\begin{equation*}\label{T3_1}
		\frac{ U_m(x)}{(1-x^2)^{\lambda-1}}=
		\frac{(m+1)\sqrt{\pi}\,\Gamma(\lambda)}{2\Gamma(\lambda+1/2)}
		\sum_{n=0}^\infty\frac{\lambda+m+2n}{m+n+1} \frac{\bbinom{\lambda-1}{n}\bbinom{\lambda}{m+n}}{\bbinom{2\lambda}{m+2n}}
		C_{m+2n}^{(\lambda)}(x)
	\end{equation*}
	Moreover, the convergence is uniform with respect to $x$ on every compact contained in $(-1,1)$.	
\end{theorem}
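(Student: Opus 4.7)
My strategy is first to compute the $\lambda$-Gegenbauer coefficients $a_n(f_{\lambda,m})$ in closed form and then, separately, to justify the pointwise (in fact uniform-on-compacts) convergence of the expansion via Theorems~\ref{Szego} and~\ref{Zygmund}. Because the factor $(1-x^2)^{1-\lambda}$ in $f_{\lambda,m}$ cancels most of the weight $\omega_\lambda$, one gets
$$a_n(f_{\lambda,m})=\frac{1}{\Vert C_n^{(\lambda)}\Vert_2^2}\int_{-1}^1 U_m(x)\,C_n^{(\lambda)}(x)\,\sqrt{1-x^2}\,dx,$$
so the substitution $x=\cos\theta$, together with the functional identity $U_m(\cos\theta)\sin\theta=\sin((m+1)\theta)$ and the product-to-sum formula $2\sin((m+1)\theta)\sin\theta=\cos(m\theta)-\cos((m+2)\theta)$, reduces this integral to a linear combination of $\int_0^\pi \cos(p\theta)\,C_n^{(\lambda)}(\cos\theta)\,d\theta$ with $p\in\{m,m+2\}$.

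To evaluate these latter integrals I would use the classical cosine expansion $C_n^{(\lambda)}(\cos\theta)=\sum_{k=0}^n\bbinom{\lambda}{k}\bbinom{\lambda}{n-k}\cos((n-2k)\theta)$ together with orthogonality of $\{\cos(p\theta)\}$ on $(0,\pi)$. This forces $a_n(f_{\lambda,m})=0$ unless $n=m+2j$ for some integer $j\geq 0$, and for such $n$ it produces
$$\frac{2}{\pi}\int_0^\pi \sin((m+1)\theta)\,C_{m+2j}^{(\lambda)}(\cos\theta)\,\sin\theta\,d\theta=\bbinom{\lambda}{j}\bbinom{\lambda}{m+j}-\bbinom{\lambda}{j-1}\bbinom{\lambda}{m+j+1},$$
with the second term understood to be $0$ when $j=0$. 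The algebraic core of the proof is then to rewrite this difference as a single product: using $\bbinom{\lambda}{j}=\frac{\lambda+j-1}{j}\bbinom{\lambda}{j-1}$ and $\bbinom{\lambda}{m+j+1}=\frac{\lambda+m+j}{m+j+1}\bbinom{\lambda}{m+j}$ one factors out $\bbinom{\lambda}{j-1}\bbinom{\lambda}{m+j}$, after which the bracketed expression collapses to $(\lambda-1)(m+1)/(j(m+j+1))$; the identity $\frac{\lambda-1}{j}\bbinom{\lambda}{j-1}=\bbinom{\lambda-1}{j}$ finally recasts the whole difference as $\frac{m+1}{m+j+1}\bbinom{\lambda-1}{j}\bbinom{\lambda}{m+j}$. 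Dividing by $\Vert C_{m+2j}^{(\lambda)}\Vert_2^2$ as given in \eqref{eqNorm} reproduces the coefficient in the statement exactly.

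For convergence, I would verify the two integrability hypotheses of Theorem~\ref{Szego}: the first is trivial, and the second reduces to the integrability of $|U_m(x)|(1-x^2)^{(1-\lambda)/2}$, which holds precisely because $\lambda<3$. Theorem~\ref{Szego} then reduces the problem to the uniform convergence, on compact subsets of $(0,\pi)$, of the Fourier cosine series of
$$\tilde{f}(\theta)=|\sin\theta|^{\lambda}f_{\lambda,m}(\cos\theta)=\sin((m+1)\theta)\,|\sin\theta|^{1-\lambda}.$$
Since $\tilde f$ is smooth on $(0,\pi)$, it is continuous and of bounded variation on every compact subinterval there, so Theorem~\ref{Zygmund} delivers exactly that uniform convergence. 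Translating back through $\theta=\arccos x$, and using that $(1-x^2)^{-\lambda/2}$ is bounded on every compact subset of $(-1,1)$, yields the claimed uniform convergence of $S_n(f_{\lambda,m})$ to $f_{\lambda,m}$ on compacts of $(-1,1)$. The main obstacle I anticipate is the algebraic collapse of the two-term difference into a single product of rising factorials; once that is in hand, the remainder is essentially bookkeeping.
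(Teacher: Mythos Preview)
Your proposal is correct and follows essentially the same route as the paper: the same reduction of the coefficient integral via $x=\cos\theta$ and the product-to-sum formula, the same use of the cosine expansion \eqref{eq25} to evaluate $\int_0^\pi\cos(p\theta)\,C_n^{(\lambda)}(\cos\theta)\,d\theta$, the same two-term difference and its collapse to $\frac{m+1}{m+j+1}\bbinom{\lambda-1}{j}\bbinom{\lambda}{m+j}$, and the same appeal to Theorems~\ref{Szego} and~\ref{Zygmund} for convergence. The only cosmetic difference is that the paper factors out $\bbinom{\lambda}{j}\bbinom{\lambda}{m+j}$ rather than $\bbinom{\lambda}{j-1}\bbinom{\lambda}{m+j}$ before simplifying, and writes $\tilde f$ as $U_m(\cos\theta)\,|\sin\theta|^{2-\lambda}$ rather than your equivalent $\sin((m+1)\theta)\,|\sin\theta|^{1-\lambda}$.
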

\begin{proof}

 The formal expansion of $f_{\lambda,m}$ in a $\lambda$-Gegenbauer series is given by
\begin{equation}
	S(f_{\lambda,m})(x)=\sum_{n=0}^\infty a_n(f_{\lambda,m}) C_n^{(\lambda)}(x)
\end{equation}
with
\begin{align}\label{eq23}
	\Vert C_n^{(\lambda)}\Vert_2^2\, a_n(f_{\lambda,m})&=
	\int_{-1}^1f_{\lambda,m}(x)C_n^{(\lambda)}(x)\omega_\lambda(x)\,dx\nonumber\\
	&=	\int_{-1}^1 U_m(x)C_n^{(\lambda)}(x)\sqrt{1-x^2}\,dx
\end{align}
Now because 
\begin{equation}
	U_m(-x)C_n^{(\lambda)}(-x)=(-1)^{n+m}U_m(x)C_n^{(\lambda)}(x),
\end{equation}
we see that $ a_n(f_{\lambda,m})=0$ if $n+m$ is odd. In addition, because the $(U_m(x))_{m\ge0}$ are orthogonal in $L^2((-1,1),\sqrt{1-x^2}\,dx)$ we conclude that $ a_n(f_{\lambda,m})=0$ if $m>n$. So, we only need to consider the case where $n=m+2q$ for some nonnegative integer $q$.

Now, using the following formula \cite[18.5.10]{Nist} which gives the Fourier series expansion of $\theta\mapsto C_n^{(\lambda)}(\cos\theta)$:
\begin{equation}\label{eq25}
	C^{(\lambda)}_{n}(\cos\theta)
	=\sum_{k=0}^{n} \bbinom{\lambda}{k}\bbinom{\lambda}{n-k} \cos((2k-n)\theta).
\end{equation}
we get from \eqref{eq23}:
\begin{align*}
	\Vert C_{m+2q}^{(\lambda)}\Vert_2^2\, a_{m+2q}(f_{\lambda,m})
	&=\int_{0}^\pi C_{m+2q}^{(\lambda)}(\cos\theta)\sin((m+1)\theta)\sin\theta \,d\theta\nonumber\\
	&=\frac{1}{2}
	\int_{0}^\pi C_{m+2q}^{(\lambda)}(\cos\theta)\cos(m\theta)\,d\theta\nonumber\\
	&\phantom{=}-\frac{1}{2}
	\int_{0}^\pi C_{m+2q}^{(\lambda)}(\cos\theta)\cos((m+2)\theta) \,d\theta\\
	&=\frac{\pi}{2} \bbinom{\lambda}{q}\bbinom{\lambda}{m+q} -
	\frac{\pi}{2}\bbinom{\lambda}{q-1}\bbinom{\lambda}{m+q+1}\nonumber\\
	&=\frac{\pi}{2}\bbinom{\lambda}{q}\bbinom{\lambda}{m+q}\left(1 -\frac{q}{q+\lambda-1}\cdot
	\frac{q+\lambda+m}{q+m+1}\right)\nonumber
\end{align*}
Finally
\begin{equation*}
	\Vert C_{m+2q}^{(\lambda)}\Vert_2^2\, a_{m+2q}(f_{\lambda,m})=
	\frac{\pi}{2}\frac{m+1}{q+m+1}
	\bbinom{\lambda-1}{q}\bbinom{\lambda}{m+q}.
\end{equation*}
It follows that 
\begin{equation}\label{eq26}
	a_{m+2q}(f_{\lambda,m})=
	\frac{(m+1)\sqrt{\pi}\,\Gamma(\lambda)}{2\Gamma(\lambda+1/2)}
	\frac{m+\lambda+2q}{m+1+q}
	\frac{\bbinom{\lambda-1}{q}\bbinom{\lambda}{m+q}}{\bbinom{2\lambda}{m+2q}}
\end{equation}
and
\begin{equation}
	S(f_{\lambda,m})(x)=	\frac{(m+1)\sqrt{\pi}\,\Gamma(\lambda)}{2\Gamma(\lambda+1/2)}
	\sum_{n=0}^\infty
	\frac{m+\lambda+2n}{m+1+n}
	\frac{\bbinom{\lambda-1}{n}\bbinom{\lambda}{m+n}}{\bbinom{2\lambda}{m+2n}}
	C_{m+2n}^{(\lambda)}(x)	
\end{equation}
Using the facts that
\begin{align*}
	\int_{-1}^1 (1-x^2)^{\lambda-1/2}\abs{ f_{\lambda,m}(x)}\, dx&=
	\int_{-1}^1\abs{U_m(x)} \sqrt{1-x^2}\,dx<+\infty,\\
	\int_{-1}^1 (1-x^2)^{(\lambda-1)/2}\abs{f_{\lambda,m}(x)}\, dx&=
	\int_{-1}^1 \frac{\abs{U_m(x)}}{(1-x^2)^{(\lambda-1)/2}}<+\infty,
\end{align*}
(here we use $\lambda<3$), and that, (according to Theorem \ref{Zygmund}), the Fourier series of the function
$\theta\mapsto U_m(\cos\theta)\abs{\sin\theta}^{2-\lambda}$ is uniformly convergent on every compact interval contained in $(0,\pi)$, we conclude by
Theorem \ref{Szego} that the series $S(f_{\lambda,m})(x)$ is convergent with sum $f_{\lambda,m}(x)$ for all $x\in(-1,1)$, the convergence is uniform with respect to $x$ on every compact interval contained in $(-1,1)$. 
\end{proof}

\bg
Next, let $\lambda\in(-1/2,1)$ with $\lambda\ne0$, and let $m$ be a positive integer, we define the function $h_{\lambda,m}$ on
$(-1,1)$ by
\begin{equation}\label{eqh}
	h_{\lambda,m}(x)=\frac{ T_m(x)}{(1-x^2)^{\lambda}}.
\end{equation}
where $T_m$ is the Chebyshev polynomial of the first kind. 
The function $h_{\lambda,m}$ belongs to $L^1((-1,1),\omega_\lambda\,dx)$. In the next theorem we expand $h_{\lambda,m}$ in a $\lambda$-Gegenbauer series.

\begin{theorem}\label{T4}
	Let $m$ be a nonnegative integer and $\lambda\in(-1/2,1)$ with $\lambda\ne0$. Then for all $x\in(-1,1)$ the following holds
	\begin{equation*}\label{eqth3_1}
		\frac{T_m(x)}{(1-x^2)^\lambda}=\frac{\sqrt{\pi}\,\Gamma(\lambda)}{\Gamma(\lambda+1/2)}
		\sum_{n=0}^\infty(\lambda+m+2n) \frac{ \bbinom{\lambda}{n}\bbinom{\lambda}{m+n}}{\bbinom{2\lambda}{m+2n}} C_{m+2n}^{(\lambda)}(x)
	\end{equation*}
	Moreover, the convergence is uniform with respect to $x$ on every compact contained in $(-1,1)$.
\end{theorem}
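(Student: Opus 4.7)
The plan is to mirror the proof of Theorem \ref{T3} step for step, with $T_m$ and weight $(1-x^2)^{-1/2}$ replacing $U_m$ and $(1-x^2)^{1/2}$. First I would write out the formal $\lambda$-Gegenbauer expansion of $h_{\lambda,m}$ and compute the coefficients via
\[
\Vert C_n^{(\lambda)}\Vert_2^2\, a_n(h_{\lambda,m})=\int_{-1}^1 T_m(x)\,C_n^{(\lambda)}(x)\,(1-x^2)^{-1/2}\,dx.
\]
The parity identity $T_m(-x)C_n^{(\lambda)}(-x)=(-1)^{n+m}T_m(x)C_n^{(\lambda)}(x)$ kills the coefficient when $n+m$ is odd. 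Since $C_n^{(\lambda)}$ is a polynomial of degree $n$ that may be expanded in the $T_k$ basis, and since $\{T_k\}_{k\ge0}$ is orthogonal on $(-1,1)$ with respect to $(1-x^2)^{-1/2}\,dx$, the coefficient also vanishes for $n<m$. So only the indices $n=m+2q$ with $q\ge 0$ survive.

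For such $n$, the substitution $x=\cos\theta$ turns the integral into $\int_0^\pi \cos(m\theta)\,C_{m+2q}^{(\lambda)}(\cos\theta)\,d\theta$. Inserting the Fourier expansion \eqref{eq25} and using orthogonality of $\{\cos(k\theta)\}_{k\ge0}$ on $[0,\pi]$, only the indices $k$ with $|2k-(m+2q)|=m$ contribute, namely $k=q$ and $k=m+q$ (which coincide when $m=0$). Summing both contributions gives a single clean term $\pi\bbinom{\lambda}{q}\bbinom{\lambda}{m+q}$ in every case. Dividing by $\Vert C_{m+2q}^{(\lambda)}\Vert_2^2$ via the norm formula \eqref{eqNorm} produces
\[
a_{m+2q}(h_{\lambda,m})=\frac{\sqrt{\pi}\,\Gamma(\lambda)}{\Gamma(\lambda+1/2)}(\lambda+m+2q)\frac{\bbinom{\lambda}{q}\bbinom{\lambda}{m+q}}{\bbinom{2\lambda}{m+2q}},
\]
which is the claimed coefficient.

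For convergence I would invoke Theorem \ref{Szego} exactly as in the proof of Theorem \ref{T3}. The two integrability hypotheses reduce to
\[
\int_{-1}^1\abs{T_m(x)}(1-x^2)^{-1/2}\,dx<\infty\quad\text{and}\quad \int_{-1}^1\abs{T_m(x)}(1-x^2)^{-(\lambda+1)/2}\,dx<\infty,
\]
the second of which forces $\lambda<1$; this is the precise reason the range of $\lambda$ is strictly smaller than in Theorem \ref{T3}. The transformed function $\theta\mapsto\abs{\sin\theta}^{\lambda}h_{\lambda,m}(\cos\theta)=\cos(m\theta)\abs{\sin\theta}^{-\lambda}$ is smooth on $(0,\pi)$, hence of bounded variation on every compact subinterval, so Theorem \ref{Zygmund} provides uniform convergence of its cosine Fourier series on compact subintervals of $(0,\pi)$. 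Szeg\"o's equiconvergence then transfers this to uniform convergence of $S(h_{\lambda,m})$ to $h_{\lambda,m}$ on every compact subinterval of $(-1,1)$.

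The main point of care, rather than a real obstacle, is verifying that no additional algebraic manipulation is needed. In Theorem \ref{T3} the identity $\sin((m+1)\theta)\sin\theta=\frac{1}{2}(\cos(m\theta)-\cos((m+2)\theta))$ produced two integrals that had to be combined via a telescoping identity on rising factorials, which is what manufactured the $(m+1)/(m+q+1)$ prefactor. Here the product $T_m(\cos\theta)=\cos(m\theta)$ appears unaided, so a single cosine drops out of \eqref{eq25} and no such combining is required, yielding the cleaner coefficient stated in the theorem.
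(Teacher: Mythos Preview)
Your proposal is correct and follows essentially the same approach as the paper's own proof: both compute the Gegenbauer coefficients of $h_{\lambda,m}$ by substituting $x=\cos\theta$ and reading off the $\cos(m\theta)$-coefficient from \eqref{eq25}, obtaining $\Vert C_{m+2q}^{(\lambda)}\Vert_2^2\,a_{m+2q}(h_{\lambda,m})=\pi\bbinom{\lambda}{q}\bbinom{\lambda}{m+q}$, and then settle convergence via the same appeal to Theorems \ref{Szego} and \ref{Zygmund} with the integrability check that forces $\lambda<1$. Your explanation of why two values $k=q$ and $k=m+q$ contribute and combine into a single term, and your remark contrasting this with the extra telescoping step in Theorem \ref{T3}, are slightly more explicit than the paper but otherwise identical in substance.
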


\begin{proof}
The formal expansion of $h_{\lambda,m}$ in a $\lambda$-Gegenbauer series is given by
\begin{equation}
	S(h_{\lambda,m})(x)=\sum_{n=0}^\infty a_n(h_{\lambda,m}) C_n^{(\lambda)}(x)
\end{equation}
with
\begin{align}\label{eq23-1}
	\Vert C_n^{(\lambda)}\Vert_2^2\, a_n(h_{\lambda,m})&=
	\int_{-1}^1h_{\lambda,m}(x)C_n^{(\lambda)}(x)\omega_\lambda(x)\,dx\nonumber\\
	&=\int_{-1}^1\frac{T_m(x)C_n^{(\lambda)}(x)}{\sqrt{1-x^2}}\,dx,\\
	&=\int_{0}^\pi C_n^{(\lambda)}(\cos\theta)\cos(m\theta)\,d\theta\nonumber.
\end{align}
Now because 
\begin{equation}
	T_m(-x)C_n^{(\lambda)}(-x)=(-1)^{n+m}T_m(x)C_n^{(\lambda)}(x),
\end{equation}
 we see that $ a_n(h_{\lambda,m})=0$ if $n+m$ is odd. So let us assume that $n$ and $m$ have the same parity. Moreover, because the $(T_m(x))_{m\ge0}$ are orthogonal polynomials in $L^2((-1,1),dx/\sqrt{1-x^2})$ we conclude that $a_n(h_{\lambda,m})=0$ if $m>n$. Therefore, we only need to consider the case where $n=m+2q$ for some nonnegative integer $q$. Now, using again \eqref{eq25} we find
\begin{align}
	\Vert C_{m+2q}^{(\lambda)}\Vert_2^2\, a_{m+2q}(h_{\lambda,m})&=
	\int_{0}^\pi C_{m+2q}^{(\lambda)}(\cos \theta) \cos(m\theta)\,d\theta\nonumber\\
	&=\pi\bbinom{\lambda}{q}\bbinom{\lambda}{m+q}.
\end{align}
It follows that 
\begin{equation}
	a_{m+2q}(h_{\lambda,m})=\frac{\sqrt{\pi}\,\Gamma(\lambda)}{\Gamma(\lambda+1/2)}
	(\lambda+m+2q)
	\frac{ \bbinom{\lambda}{q}\bbinom{\lambda}{m+q}}{\bbinom{2\lambda}{m+2q}}
\end{equation}
Consequently
\begin{equation}
	S(h_{\lambda,m})=
		\frac{\sqrt{\pi}\Gamma(\lambda)}{\Gamma(\lambda+1/2)}
		\sum_{q=0}^\infty
	(\lambda+m+2q)\frac{ \bbinom{\lambda}{q}\bbinom{\lambda}{m+q}}{\bbinom{2\lambda}{m+2q}} C_{m+2q}^{(\lambda)}(x).
\end{equation}
Now, using the facts that
\begin{align*}
	\int_{-1}^1 (1-x^2)^{\lambda-1/2}\abs{ h_{\lambda,m}(x)}\, dx&=
	\int_{-1}^1\frac{\abs{T_m(x)}}{\sqrt{1-x^2}}\,dx<+\infty,\\
	\int_{-1}^1 (1-x^2)^{(\lambda-1)/2}\abs{h_{\lambda,m}(x)}\, dx&=
	\int_{-1}^1 \frac{\abs{T_m(x)}}{(1-x^2)^{(\lambda+1)/2}}<+\infty,
\end{align*}
and that, (according to  Theorem \ref{Zygmund}), the Fourier series of the function
$\theta\mapsto \cos(m\theta)\abs{\sin\theta}^{-\lambda}$ is uniformly convergent on every compact interval contained in $(0,\pi)$, we conclude by
Theorem \ref{Szego} that the series $S(h_{\lambda,m})(x)$ is convergent with sum $h_{\lambda,m}(x)$ for all $x\in(-1,1)$ and that the convergence is uniform with respect to $x$ on every compact interval contained in $(-1,1)$. 
\end{proof}

\begin{remark}
	The formula in Theorem \ref{T4} can be written, using the duplication formula for the Gamma function, in the following equivalent form.
		\begin{equation*}\label{rm_4}
		\frac{T_m(x)}{(1-x^2)^\lambda}=\sum_{n=0}^\infty(\lambda+m+2n) \binom{m+2n}{n}
		\frac{\Gamma(\lambda+n)\Gamma(\lambda+m+n)}{2^{1-2\lambda}\Gamma(2\lambda+m+2n)} C_{m+2n}^{(\lambda)}(x).
	\end{equation*}
\end{remark}

\bg
\section{Parseval's theorem in action}\label{sec3}
Note that  $f_{\lambda,m}\in L^2((-1,1),\omega_\lambda dx)$ for $\lambda\in(-1/2,5/2)$ and similarly $h_{\lambda,m}\in L^2((-1,1),\omega_\lambda dx)$ for $\lambda\in(-1/2,1/2)$. Thus we may apply Parseval's theorem. But before proceeding we will need to evaluate two integrals. This is what we do in the next Lemma.

\begin{lemma}\label{lm5}${}$\par
\noindent{\bf(a)} For $\Re(\mu)<1/2$ and all nonnegative integers $m$ we have
\begin{align}\label{eqJ}
	J_m(\mu)\egdef\int_0^\pi\frac{\cos(2m\theta)}{\sin^{2\mu}\theta}\,d\theta&=
	\frac{\sqrt{\pi}\,\Gamma(1/2-\mu)}{\Gamma(1-\mu)}\cdot\frac{(\mu)_m}{(1-\mu)_m}.
\end{align}	
\noindent{\bf(b)} For $\Re(\mu)<1/2$ and all nonnegative integers $m$ we have
\begin{align}\label{eqJ}
	K_m(\mu)\egdef\int_0^\pi\frac{\sin((2m+1)\theta)}{\sin^{2\mu+1}\theta}\,d\theta&=
	\frac{\sqrt{\pi}\,\Gamma(1/2-\mu)}{\Gamma(1-\mu)}\cdot\frac{(1+\mu)_m}{(1-\mu)_m}.
\end{align}	
\end{lemma}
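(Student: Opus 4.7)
The plan is to prove both formulas by reducing to the case $m=0$ and then iterating a one-step recurrence in $m$. Parts (a) and (b) admit the same structural argument, and I would present them in parallel.

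First I would dispose of the base case. Observe that $K_0(\mu)=\int_0^\pi\sin\theta\cdot\sin^{-2\mu-1}\theta\,d\theta=\int_0^\pi\sin^{-2\mu}\theta\,d\theta=J_0(\mu)$, and the classical Euler beta-function evaluation (valid for $\Re\mu<1/2$) gives
\begin{equation*}
J_0(\mu)=\frac{\sqrt{\pi}\,\Gamma(1/2-\mu)}{\Gamma(1-\mu)},
\end{equation*}
which agrees with both claimed formulas at $m=0$, where the rising-factorial ratios equal $1$.

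Next I would establish the two one-term recurrences
\begin{equation*}
(1-\mu+m)J_{m+1}(\mu)=(\mu+m)J_m(\mu),\qquad (1-\mu+m)K_{m+1}(\mu)=(1+\mu+m)K_m(\mu).
\end{equation*}
For (a), the key step is to verify the explicit antiderivative
\begin{equation*}
\frac{d}{d\theta}\bigl[\cos((2m+1)\theta)\sin^{1-2\mu}\theta\bigr]=\bigl[(1-\mu+m)\cos((2m+2)\theta)-(\mu+m)\cos(2m\theta)\bigr]\sin^{-2\mu}\theta.
\end{equation*}
The left side expands to $[-(2m+1)\sin((2m+1)\theta)\sin\theta+(1-2\mu)\cos((2m+1)\theta)\cos\theta]\sin^{-2\mu}\theta$, and the bracket on the right is rewritten in exactly that form by applying the angle-addition formulas $\cos((2m+1)\theta\pm\theta)=\cos((2m+1)\theta)\cos\theta\mp\sin((2m+1)\theta)\sin\theta$ and collecting terms. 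Integrating over $[0,\pi]$ kills the boundary contribution because $\sin^{1-2\mu}\theta\to 0$ at $\theta=0$ and $\theta=\pi$ (the hypothesis $\Re\mu<1/2$ is used here), leaving the asserted recurrence. For (b), the analogous antiderivative is
\begin{equation*}
\frac{d}{d\theta}\bigl[\sin((2m+2)\theta)\sin^{-2\mu}\theta\bigr]=\bigl[(1-\mu+m)\sin((2m+3)\theta)-(1+\mu+m)\sin((2m+1)\theta)\bigr]\sin^{-2\mu-1}\theta,
\end{equation*}
checked identically by expanding $\sin((2m+2)\theta\pm\theta)$. The boundary term again vanishes under $\Re\mu<1/2$ and the $K$-recurrence follows.

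Finally, iterating each recurrence from the base case and writing the resulting products with the rising-factorial notation $(\mu)_m=\prod_{k=0}^{m-1}(\mu+k)$, $(1\pm\mu)_m=\prod_{k=0}^{m-1}(1\pm\mu+k)$ yields the closed forms exactly as stated. The only nonroutine step is guessing the correct antiderivatives; once they are written down, verification is a routine application of the sum-to-product formulas, and the rest of the argument is elementary.
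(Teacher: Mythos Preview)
Your proof is correct and follows essentially the same strategy as the paper: reduce to the base case $K_0(\mu)=J_0(\mu)$, evaluate $J_0(\mu)$ via the beta integral, and derive the same one-step recurrences in $m$ by an integration-by-parts argument. The only difference is cosmetic packaging: the paper first forms $J_m(\mu)\mp J_{m+1}(\mu)$ (respectively $K_m(\mu)\mp K_{m-1}(\mu)$) via product-to-sum identities and then integrates by parts, whereas you write down the antiderivatives $\cos((2m+1)\theta)\sin^{1-2\mu}\theta$ and $\sin((2m+2)\theta)\sin^{-2\mu}\theta$ directly and verify them; these are the same integration by parts, and the boundary-term vanishing uses $\Re(1-2\mu)>0$ in both presentations.
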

\begin{proof}
{\bf(a)} Note that
	\begin{align*}
		J_m({\mu})-J_{m+1}({\mu})&=\int_0^\pi\frac{\cos(2m\theta)-\cos((2m+2)\theta)}{\sin^{2\mu}\theta}\,d\theta\\
		&=2\int_0^\pi \sin((2m+1)\theta)\,\sin^{1-{2\mu}}\theta\,d\theta\\
		&=\left[\frac{-2\cos((2m+1)\theta)}{(2m+1)\sin^{{2\mu}-1}\theta}\right]_0^\pi\\
	&\phantom{=}+\frac{2-{4\mu}}{2m+1}\int_0^\pi\frac{\cos\theta\cos((2m+1)\theta)}{\sin^{2\mu}\theta}\,d\theta\\
		&=\frac{1-{2\mu}}{2m+1}
		\int_0^\pi\frac{\cos(2m\theta)+\cos((2m+2)\theta)}{\sin^{2\mu}\theta}\,d\theta\\
		&=\frac{1-{2\mu}}{2m+1}(J_m(\mu)+J_{m+1}(\mu)).
	\end{align*}
	It follows that $(m+\mu)J_m(\mu)=(m+1-\mu)J_{m+1}(\mu)$. Thus
	\begin{equation*}
		J_{m+1}(\mu)=\frac{m+\mu}{m+1-\mu}J_m(\mu),
	\end{equation*}	
	and consequently
	\begin{equation}
		J_m(\mu)=\frac{(\mu)_m}{(1-\mu)_m} J_0(\mu),
	\end{equation}	
	Finally the change of variables $\sin^2\theta=t$ shows 
	\begin{align}\label{eq36}
		J_0(\mu)&=2\int_0^{\pi/2}\frac{d\theta}{(\sin^2\theta)^\mu}=
		\int_0^1t^{-\mu-1/2}(1-t)^{-1/2}\,dt\nonumber\\
		&=\frac{\Gamma(1/2)\Gamma(1/2-\mu)}{\Gamma(1-\mu)}=\frac{\sqrt{\pi}\Gamma(1/2-\mu)}{\Gamma(1-\mu)}
		\end{align}
This proves {\bf(a)}.

\bg
\noindent{\bf(b)} This is similar to part {\bf (a)}. Indeed,
\begin{align*}
	K_m(\mu)-K_{m-1}(\mu)&=
	\int_0^\pi\frac{\sin((2m+1)\theta)-\sin((2m-1)\theta)}{\sin^{2\mu+1}\theta}\,d\theta\\
	&=2\int_0^\pi\cos(2m\theta)\sin^{-2\mu}\theta\,d\theta\\
	&=\left[\frac{\sin(2m\theta)}{m}\sin^{-2\mu}\theta\right]_0^\pi+\frac{\mu}{m}
	\int_0^\pi\frac{2\sin(2m\theta)\cos\theta}{\sin^{2\mu+1}}\theta\,d\theta\\
	&=\frac{\mu}{m}
	\int_0^\pi\frac{\sin((2m+1)\theta)+\sin((2m-1)\theta)}{\sin^{2\mu+1}}\theta\,d\theta\\
	&=\frac{\mu}{m}(K_m(\mu)+K_{m-1}(\mu))
\end{align*}
It follows that $(m-\mu)K_m(\mu)=(m+\mu)K_{m-1}(\mu)$. Now because $K_0(\mu)=J_0(\mu)$ we conclude that 
\begin{equation*}
	K_m(\mu)=\frac{(1+\mu)_m}{(1-\mu)_m}J_0(\mu),
\end{equation*}
and {\bf(b)} follows using \eqref{eq36}.
\end{proof}

\begin{theorem} ${}$\par\label{T6}
\noindent{\bf(a)} For all $\lambda\in(-1/2,1/2)$ and all nonnegative integers $m$ the following holds
\begin{equation}\label{eqT6_a}
	\sum_{n=0}^\infty(\lambda+m+2n)\frac{\bbinom{\lambda}{n}^2\bbinom{\lambda}{n+m}^2}{\bbinom{2\lambda}{m+2n}}
	=\frac{\tan(\pi\lambda)}{2\pi}\left(1+\frac{(\lambda)_m}{(1-\lambda)_m} \right).
\end{equation}
\noindent{\bf(b)} For all $\lambda\in(-1/2,5/2)$ and all nonnegative integers $m$ the following holds
\begin{equation}\label{eqT6_b}
	\sum_{n=0}^\infty\frac{\lambda+m+2n}{(m+n+1)^2}
	\frac{\bbinom{\lambda-1}{n}^2\bbinom{\lambda}{m+n}^2}{\bbinom{2\lambda}{m+2n}}\\
	=\frac{(1-2\lambda)\tan(\pi\lambda)}{(m+1)^2\pi(1-\lambda)}
	\left(1-\frac{(\lambda-1)_{m+1}}{(2-\lambda)_{m+1}}\right)
\end{equation} \par
where the right side is defined by continuity when $\lambda\in\{1/2,3/2\}$. \par
\noindent{\bf(c)} For all $\lambda\in(-1/2,3/2)$ and all nonnegative integers $m,q$ the following holds
\begin{align}\label{eqT6_c}
	\sum_{n=m}^\infty\frac{\lambda+q+2n}{q+m+1+n} &\frac{\bbinom{\lambda-1}{n-m}\bbinom{\lambda}{n}\bbinom{\lambda}{q+n}\bbinom{\lambda}{q+m+n}}{\bbinom{2\lambda}{q+2n}}\nonumber\\
	&=\frac{(1-2\lambda)\tan(\pi\lambda)}{2\pi(q+2m+1)(1-\lambda)}
	\left(\frac{(\lambda)_{m+q}}{(2-\lambda)_{m+q}}+\frac{(\lambda)_{m}}{(2-\lambda)_{m}}\right)
\end{align}
where the right side is defined by continuity when $\lambda=1/2$. \par
\noindent{\bf(d)} For all $\lambda\in(-1/2,3/2)$ and all nonnegative integers $m,q$ the following holds
\begin{align}\label{eqT6_d}
	\sum_{n=m+1}^\infty
	\frac{\lambda+q+2n}{q+n+1}
	&\frac{\bbinom{\lambda}{n-m-1}\bbinom{\lambda-1}{n}\bbinom{\lambda}{q+n}\bbinom{\lambda}{q+m+1+n}}{\bbinom{2\lambda}{q+2n}}\nonumber\\
	&=
	\frac{(1-2\lambda)\tan(\pi\lambda)}{2\pi(q+1)(1-\lambda)}
	\left(\frac{(\lambda)_{m+q+1}}{(2-\lambda)_{m+q+1}}-\frac{(\lambda)_{m}}{(2-\lambda)_{m}}\right)
\end{align}
where the right side is defined by continuity when $\lambda=1/2$. \par
\end{theorem}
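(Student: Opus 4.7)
The plan is to apply Parseval's theorem (and its polarized inner-product version) to the two Gegenbauer expansions established in Theorems \ref{T3} and \ref{T4}, and then to evaluate the resulting $L^2$-norms and cross-integrals by the change of variables $x=\cos\theta$ together with Lemma \ref{lm5} and the reflection formulas $\Gamma(\lambda)\Gamma(1-\lambda)=\pi/\sin(\pi\lambda)$ and $\Gamma(\lambda+1/2)\Gamma(1/2-\lambda)=\pi/\cos(\pi\lambda)$.

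For part \textbf{(a)} I apply Parseval to $h_{\lambda,m}$, which lies in $L^2((-1,1),\omega_\lambda\,dx)$ exactly when $\lambda\in(-1/2,1/2)$. Writing
\[
\Vert h_{\lambda,m}\Vert_2^2=\int_0^\pi\frac{\cos^2(m\theta)}{\sin^{2\lambda}\theta}\,d\theta=\tfrac{1}{2}\bigl(J_0(\lambda)+J_m(\lambda)\bigr),
\]
applying Lemma \ref{lm5}(a), and equating with the Parseval sum $\sum_q|a_{m+2q}(h_{\lambda,m})|^2\Vert C_{m+2q}^{(\lambda)}\Vert_2^2$, the Gamma prefactor collapses under the two reflection formulas to $\tan(\pi\lambda)/(2\pi)$, yielding \eqref{eqT6_a}. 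For part \textbf{(b)} the same script applied to $f_{\lambda,m}$, which lies in $L^2((-1,1),\omega_\lambda\,dx)$ iff $\lambda<5/2$, gives $\Vert f_{\lambda,m}\Vert_2^2=\tfrac{1}{2}(J_0(\lambda-1)-J_{m+1}(\lambda-1))$ after using $U_m(\cos\theta)\sin\theta=\sin((m+1)\theta)$ and $\sin^2=(1-\cos 2)/2$; Lemma \ref{lm5}(a) at $\mu=\lambda-1$ together with the same Gamma manipulation then deliver \eqref{eqT6_b}, the factor $(1-2\lambda)/(1-\lambda)$ emerging from $\Gamma(3/2-\lambda)/\Gamma(2-\lambda)=(1/2-\lambda)\Gamma(1/2-\lambda)/\bigl((1-\lambda)\Gamma(1-\lambda)\bigr)$.

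Parts \textbf{(c)} and \textbf{(d)} are polarized versions of Parseval: the left-hand sides are nothing but the cross-Parseval sums
\[
\sum_n a_n(f_{\lambda,q+2m})\,a_n(h_{\lambda,q})\,\Vert C_n^{(\lambda)}\Vert_2^2\quad\text{and}\quad\sum_n a_n(f_{\lambda,q})\,a_n(h_{\lambda,q+2m+2})\,\Vert C_n^{(\lambda)}\Vert_2^2
\]
respectively, since in each case both families of coefficients share the same parity constraint on $n$; a direct substitution of the explicit formulas from Theorems \ref{T3} and \ref{T4} confirms the matching after reindexing the dummy variable. The associated cross integrals reduce, via $x=\cos\theta$ and the product-to-sum identity $\sin A\cos B=\tfrac12(\sin(A+B)+\sin(A-B))$, to
\[
\tfrac{1}{2}\bigl(K_{q+m}(\lambda-1)+K_m(\lambda-1)\bigr)\quad\text{and}\quad\tfrac{1}{2}\bigl(K_{q+m+1}(\lambda-1)-K_m(\lambda-1)\bigr),
\]
the sign flip in the second coming from $A-B=-(2m+1)\theta$. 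Lemma \ref{lm5}(b) at $\mu=\lambda-1$ and the same Gamma identities then produce the stated right-hand sides of \eqref{eqT6_c} and \eqref{eqT6_d}.

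The \textbf{main obstacle} is that the Parseval identity $\langle f,g\rangle=\sum_n a_n(f)a_n(g)\Vert C_n^{(\lambda)}\Vert_2^2$ is provided by the $L^2$ theory only when both $f$ and $g$ belong to $L^2((-1,1),\omega_\lambda\,dx)$; in (c) and (d) this forces the Chebyshev-of-the-first-kind factor to be in $L^2$ and hence $\lambda<1/2$, whereas the theorem is stated on $(-1/2,3/2)$. I plan to bridge the gap by analytic continuation in $\lambda$: both sides are analytic in $\lambda$ on $(-1/2,3/2)\setminus\{1/2\}$ and continuous at $\lambda=1/2$, since a Stirling estimate shows the general term of the left-hand series decays as $n^{2\lambda-4}$ (giving absolute convergence locally uniformly in $\lambda$ for $\lambda<3/2$), while on the right the apparent singularity of $\tan(\pi\lambda)$ at $\lambda=1/2$ is cancelled by the factor $(1-2\lambda)$. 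The identity established on $(-1/2,1/2)$ by direct Parseval therefore extends to the full stated range by uniqueness of analytic continuation.
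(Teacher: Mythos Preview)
Your proposal is correct and follows essentially the same route as the paper: Parseval for $h_{\lambda,m}$ and $f_{\lambda,m}$ separately to obtain (a) and (b), the polarized Parseval identity applied to the pairs $(f_{\lambda,q+2m},h_{\lambda,q})$ and $(f_{\lambda,q},h_{\lambda,q+2m+2})$ for (c) and (d), with the cross integrals evaluated through Lemma~\ref{lm5}(b) at $\mu=\lambda-1$, and finally analytic continuation in $\lambda$ to push (c) and (d) from $(-1/2,1/2)$ to $(-1/2,3/2)$. The paper carries out exactly these steps, including the same decay estimate $n^{2\lambda-4}$ (phrased via Gauss' product for $1/\Gamma$) to justify local uniform convergence and hence analyticity of the left-hand sides.
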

\begin{proof}
{\bf(a)} We know that for $\lambda\in(-1/2,1/2)$ the function $h_{\lambda,m}$ defined in \eqref{eqh} belongs to $L^2((-1,1),\omega_\lambda dx)$ and has the expansion given in Theorem \ref{T4} in terms of $\lambda$-Gegenbauer polynomials. But
\begin{align}
	\Vert h_{\lambda,m}\Vert_2^2&=\int_{-1}^1h_{\lambda,m}^2(x)\omega_\lambda(x)\,dx,\nonumber\\
	&=\int_{-1}^1\frac{(T_m(x))^2}{(1-x^2)^{\lambda}}\,\frac{dx}{\sqrt{1-x^2}},\nonumber\\
	&=\frac{1}{2}\int_0^\pi\frac{1+\cos(2m\theta)}{\sin^{2\lambda}\theta}\,d\theta=\frac{J_0(\lambda)+J_m(\lambda)}{2},
\end{align}
where $J_m$ was defined and calculated in Lemma \ref{lm5}.	
It follows that,
\begin{equation}\label{eq38}
	\Vert h_{\lambda,m}\Vert_2^2=
	\frac{\sqrt{\pi}}{2}\frac{\Gamma(1/2-\lambda)}{\Gamma(1-\lambda)}\left(1+\frac{(\lambda)_m}{(1-\lambda)_m}\right).
\end{equation}
On the other hand according to Parseval's theorem, Theorem \ref{T4} and  \eqref{eqNorm}, we have
\begin{align}\label{eq39}
	\Vert h_{\lambda,m}\Vert_2^2
	&=\left(\frac{\sqrt{\pi}\,\Gamma(\lambda)}{\Gamma(\lambda+1/2)}\right)^2
	\sum_{n=0}^\infty\left((\lambda+m+2n)
	\frac{\bbinom{\lambda}{n}\bbinom{\lambda}{m+n}}{\bbinom{2\lambda}{m+2n}}\right)^2
 \Vert C_{m+2n}^{(\lambda)}\Vert_2^2,\nonumber\\
&=\frac{\pi\sqrt{\pi}\,\Gamma(\lambda)}{\Gamma(\lambda+1/2)}
\sum_{n=0}^\infty(\lambda+m+2n)\frac{\bbinom{\lambda}{n}^2\bbinom{\lambda}{m+n}^2}{\bbinom{2\lambda}{m+2n}}
\end{align}
 So, from \eqref{eq38} and \eqref{eq39} we conclude that
\begin{equation}
	\sum_{n=0}^\infty(\lambda+m+2n)\frac{\bbinom{\lambda}{n}^2\bbinom{\lambda}{m+n}^2}{\bbinom{2\lambda}{m+2n}}=\frac{\tan(\pi\lambda)}{2\pi}\left(1+\frac{(\lambda)_m}{(1-\lambda)_m}\right),
\end{equation}
where the expression with the Gamma function was simplified using the reflection formula: ${\Gamma(z)\Gamma(1-z)=\pi\csc(\pi z)}$,  see \cite[5.5.3]{Nist}.

\bg
\noindent{\bf(b)} Similarly, for $\lambda\in(-1/2,1/2)$, the function $f_{\lambda,m}$ defined by \eqref{eqf}  belongs to $L^2((-1,1),\omega_\lambda dx)$ and has the expansion given in Theorem \ref{T3} in terms of $\lambda$-Gegenbauer polynomials. Moreover,
\begin{align}
	\Vert f_{\lambda,m}\Vert_2^2
	&=\int_{-1}^1f_{\lambda,m}^2(x)\omega_\lambda(x)\,dx=\int_{-1}^1(1-x^2)^{3/2-\lambda}(U_m(x))^2\,dx,\nonumber\\
	&=\frac{1}{2}\int_0^\pi\frac{1-\cos((2m+2)\theta)}{\sin^{2\lambda-2}\theta}\,d\theta
	=\frac{J_0(\lambda-1)-J_{m+1}(\lambda-1)}{2},
\end{align}
So, according to Lemma \ref{lm5} we have for $\lambda\in (0,1)$:
\begin{equation}
	\Vert f_{\lambda,m}\Vert_2^2=
	\frac{1}{2} 
	\frac{\sqrt{\pi}\,\Gamma(3/2-\lambda)}{\Gamma(2-\lambda)}
	\left(1-\frac{(\lambda-1)_{m+1}}{(2-\lambda)_{m+1}}\right).
\end{equation}
Using Parseval's theorem, Theorem \ref{T3} and \eqref{eqNorm} we obtain
\begin{equation*}\label{eq310}
	\Vert f_{\lambda,m}\Vert_2^2
=\frac{(m+1)^2\pi\sqrt{\pi}\,\Gamma(\lambda)}{4\Gamma(\lambda+1/2)}
	\sum_{n=0}^\infty\frac{\lambda+m+2n}{(m+n+1)^2}
	\frac{\bbinom{\lambda-1}{n}^2\bbinom{\lambda}{m+n}^2}{\bbinom{2\lambda}{m+2n}}.
\end{equation*}
Rearranging and using the reflection formula for the Gamma function the formula in {\bf(b)} is proved.

\bg
\noindent{\bf(c)} Assuming $\lambda\in(-1/2,1/2)$ we know that both $f_{\lambda,p}$ and $h_{\lambda,q}$ belong to $L^2((-1,1),\omega_\lambda dx)$. Moreover, assuming that $p$ and $q$ are of the same parity, we have
\begin{align}\label{eqFH}
	\left\langle f_{\lambda,p},h_{\lambda,q}\right\rangle
	&=\int_{-1}^1f_{\lambda,p}(x)h_{\lambda,q}\omega_\lambda(x)\,dx\nonumber\\
	&=\int_{-1}^1(1-x^2)^{1/2-\lambda}U_p(x)T_q(x)\,dx\nonumber\\
	&=\int_{0}^\pi(\sin\theta)^{1-2\lambda}U_p(\cos\theta)T_q(\cos\theta)\sin\theta\,d\theta\nonumber\\
	&=\int_0^\pi\frac{\sin((p+1)\theta)\cos(q\theta)}{\sin^{2\lambda-1}\theta}\,d\theta\nonumber\\
	&=\frac{1}{2}\int_0^\pi\frac{\sin((p+q+1)\theta)+\sin((p+1-q)\theta)}{\sin^{2\lambda-1}\theta}\,d\theta\nonumber\\
	&=\begin{cases}
		\frac{1}{2}(K_{(p+q)/2}(\lambda-1)+K_{(p-q)/2}(\lambda-1))&\text{if $p\geq q$,}\\
		\\
			\frac{1}{2}(K_{(p+q)/2}(\lambda-1)-K_{(q-p)/2-1}(\lambda-1))&\text{if $p<q$.}\\
	\end{cases}
\end{align}
Thus, for nonnegative integers $q$ and $m$ we have
\begin{align}
	\left\langle f_{\lambda,q+2m},h_{\lambda,q}\right\rangle&=
	\frac{K_{q+m}(\lambda-1)+K_{m}(\lambda-1)}{2}\nonumber\\
	&=
\frac{\sqrt{\pi}\Gamma(3/2-\lambda)}{2\Gamma(2-\lambda)}\left(\frac{(\lambda)_{m+q}}{(2-\lambda)_{m+q}}+\frac{(\lambda)_{m}}{(2-\lambda)_{m}}\right).
\end{align}
But according to Theorem \ref{T3} and Theorem \ref{T4} we have
\begin{align*}
	 f_{\lambda,q+2m}(x)& =
	\frac{(q+2m+1)\sqrt{\pi}\,\Gamma(\lambda)}{2\Gamma(\lambda+1/2)}
	\sum_{n=m}^\infty\frac{\lambda+q+2n}{q+m+n+1} \frac{\bbinom{\lambda-1}{n-m}\bbinom{\lambda}{q+m+n}}{\bbinom{2\lambda}{q+2n}}
	C_{q+2n}^{(\lambda)}(x)\\
	h_{\lambda,q}(x)&
	=\frac{\sqrt{\pi}\,\Gamma(\lambda)}{\Gamma(\lambda+1/2)}
		\sum_{n=0}^\infty(\lambda+q+2n) \frac{ \bbinom{\lambda}{n}\bbinom{\lambda}{q+n}}{\bbinom{2\lambda}{q+2n}} C_{q+2n}^{(\lambda)}(x)
\end{align*}
Thus $\left\langle f_{\lambda,q+2m},h_{\lambda,q}\right\rangle$ is given by
\begin{equation*}
	\frac{(q+2m+1)\pi\sqrt{\pi}\,\Gamma(\lambda)}{2\Gamma(\lambda+1/2)}	
	\sum_{n=m}^\infty\frac{\lambda+q+2n}{q+m+1+n} \frac{\bbinom{\lambda-1}{n-m}\bbinom{\lambda}{n}\bbinom{\lambda}{q+n}\bbinom{\lambda}{q+m+n}}{\bbinom{2\lambda}{q+2n}}.
\end{equation*}
It follows that
\begin{align}
	\sum_{n=m}^\infty\frac{\lambda+q+2n}{q+m+1+n} &\frac{\bbinom{\lambda-1}{n-m}\bbinom{\lambda}{n}\bbinom{\lambda}{q+n}\bbinom{\lambda}{q+m+n}}{\bbinom{2\lambda}{q+2n}}\nonumber\\
	&=\frac{(1-2\lambda)\tan(\pi\lambda)}{2\pi(q+2m+1)(1-\lambda)}
	\left(\frac{(\lambda)_{m+q}}{(2-\lambda)_{m+q}}+\frac{(\lambda)_{m}}{(2-\lambda)_{m}}\right),
\end{align}
which is ${\bf(c)}$ for $\lambda\in(-1/2,1/2)$. For a given $q$ and $m$, let us consider the complex functions
\begin{align*}
A_{n}(z)&=\frac{z+q+2n}{q+m+1+n} \frac{\bbinom{z-1}{n-m}\bbinom{z}{n}\bbinom{z}{q+n}\bbinom{z}{q+m+n}}{\bbinom{2z}{q+2n}},\\
B(z)&=
\frac{(1-2z)\tan(\pi z)}{2\pi(q+2m+1)(1-z)}
\left(\frac{(z)_{m+q}}{(2-z)_{m+q}}+\frac{(z)_{m}}{(2-z)_{m}}\right).
\end{align*}
	
Clearly, $B$ is analytic in the domain $\Omega=\{z\in\mathbb{C}:0<\Re(z)<3/2\}$ with a removable singularity at $z=1/2$. On the other hand, using the fact that $\left(k^{1-z}\bbinom{z}{k}\right)_{k>0}$ converges uniformly on every compact subset of $\mathbb{C}$ to $1/\Gamma(z)$, (this is Gauss' version of the definition of $\Gamma(z)$ as a product  \cite[5.8.1]{Nist}), we see that $\big(n^{4-2z}A_n(z)\big)_{n\ge0}$ converges uniformly on every compact subset
of $\Omega$, (to $2^{z-2z}(z-1)\Gamma(2z)/(\Gamma(z))^4$.) This implies that the series $z\mapsto \sum_{n=m}^\infty A_n(z)$ is convergent to some  analytic function in $\Omega$. Now, because $B(\lambda)=\sum_{n=m}^\infty A_n(\lambda)$ for $\lambda\in(0,1/2)$ we conclude the validity of this equality for $\lambda\in(0,3/2)$ by analytic continuation. This proves {\bf (c)}.

\bg
\noindent{\bf(d)} Similarly to {\bf(c)} it is enough to prove the result for $\lambda\in(-1/2,1/2)$. According to \eqref{eqFH}, for nonnegative integers $p$ and $m$ we have
\begin{align}
	\left\langle f_{\lambda,p},h_{\lambda,p+2m+2}\right\rangle&=
	\frac{K_{p+m+1}(\lambda-1)-K_{m}(\lambda-1)}{2}\nonumber\\
	&=
	\frac{\sqrt{\pi}\Gamma(3/2-\lambda)}{2\Gamma(2-\lambda)}\left(\frac{(\lambda)_{m+p+1}}{(2-\lambda)_{m+p+1}}-\frac{(\lambda)_{m}}{(2-\lambda)_{m}}\right),
\end{align}
but according to Theorem \ref{T3} and Theorem \ref{T4} we have
\begin{align*}
	f_{\lambda,p}(x)& =
	\frac{(p+1)\sqrt{\pi}\,\Gamma(\lambda)}{2\Gamma(\lambda+1/2)}
	\sum_{n=0}^\infty\frac{\lambda+p+2n}{p+n+1} \frac{\bbinom{\lambda-1}{n}\bbinom{\lambda}{p+n}}{\bbinom{2\lambda}{p+2n}}
	C_{p+2n}^{(\lambda)}(x)\\
	h_{\lambda,p+2m+2}(x)&
	=\frac{\sqrt{\pi}\,\Gamma(\lambda)}{\Gamma(\lambda+1/2)}
	\sum_{n=m+1}^\infty(\lambda+p+2n) \frac{ \bbinom{\lambda}{n-m-1}\bbinom{\lambda}{p+m+1+n}}{\bbinom{2\lambda}{p+2n}} C_{p+2n}^{(\lambda)}(x)
\end{align*}
hence $\left\langle f_{\lambda,p},h_{\lambda,p+2m+2}\right\rangle$, is given by
\begin{equation*}
\frac{(p+1)\pi\sqrt{\pi}\,\Gamma(\lambda)}{2\Gamma(\lambda+1/2)}
\sum_{n=m+1}^\infty
\frac{\lambda+p+2n}{p+n+1}
\frac{\bbinom{\lambda}{n-m-1}\bbinom{\lambda-1}{n}\bbinom{\lambda}{p+n}\bbinom{\lambda}{p+m+1+n}}{\bbinom{2\lambda}{p+2n}}.
\end{equation*}
It follows that
\begin{align}
	\sum_{n=m+1}^\infty
	\frac{\lambda+p+2n}{p+n+1}
	&\frac{\bbinom{\lambda}{n-m-1}\bbinom{\lambda-1}{n}\bbinom{\lambda}{p+n}\bbinom{\lambda}{p+m+1+n}}{\bbinom{2\lambda}{p+2n}}\nonumber\\
	&=
	\frac{(1-2\lambda)\tan(\pi\lambda)}{2\pi(p+1)(1-\lambda)}
\left(\frac{(\lambda)_{m+p+1}}{(2-\lambda)_{m+p+1}}-\frac{(\lambda)_{m}}{(2-\lambda)_{m}}\right),
\end{align}
which is the required formula.
\end{proof}

\section{Applications}\label{sec4}

Since our series expansions of theorems \ref{T3} and \ref{T4} are valid in the interval $(-1,1)$, and because we have an explicit evaluation for the considered polynomials at $x=0$, namely, we have 
$C_k^{(\lambda)}(0)=0$ if $k$ is odd and $C_k^{(\lambda)}(0)=(-1)^{k/2}\bbinom{\lambda}{k/2}$ if $k$ is even, see \cite[Table 18.6.1]{Nist}. The next results follow immediately.
\begin{theorem}\label{T7}
	Let $m$ be a nonnegative integer, and $\lambda\in(-1/2,3)$ with $\lambda\ne0$. Then 
\begin{align}\label{T7_1}
\frac{2\Gamma(\lambda+1/2)}{(2m+1)\sqrt{\pi}\,\Gamma(\lambda)}&=
		\sum_{n=m}^\infty(-1)^{n-m}\frac{\lambda+2n}{n+m+1} \frac{\bbinom{\lambda-1}{n-m}	\bbinom{\lambda}{n}\bbinom{\lambda}{m+n}}{\bbinom{2\lambda}{2n}}\\
		&=\sum_{n=m}^\infty\frac{(-1)^{n-m}}{4^n}\frac{\lambda+2n}{n+m+1}\binom{2n}{n} \frac{\bbinom{\lambda-1}{n-m}	\bbinom{\lambda}{m+n}}{\bbinom{\lambda+1/2}{n}}\\
		&=\sum_{n=m}^\infty\frac{(-1)^{n-m}}{4^n\, n!}\frac{\lambda+2n}{n+m+1}\binom{2n}{n-m} \frac{(\lambda-1)_{n-m}	(\lambda)_{m+n}}{(\lambda+1/2)_{n}}
	\end{align}
\end{theorem}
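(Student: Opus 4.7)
The natural approach is to apply Theorem \ref{T3} with polynomial parameter $2m$ (rather than $m$) and evaluate at $x=0$. The reason the index must be even is that $C_k^{(\lambda)}(0)=0$ for $k$ odd, so only expansions built from even-indexed Gegenbauer polynomials survive the evaluation at $x=0$; the formula in Theorem \ref{T3} applied to $U_{2m}/(1-x^2)^{\lambda-1}$ involves $C_{2m+2n}^{(\lambda)}$, which is exactly what we need.

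First I would compute the two input values:
\begin{equation*}
U_{2m}(0)=\frac{\sin((2m+1)\pi/2)}{\sin(\pi/2)}=(-1)^m,\qquad
C_{2m+2n}^{(\lambda)}(0)=(-1)^{m+n}\bbinom{\lambda}{m+n},
\end{equation*}
the second following from the value of Gegenbauer polynomials at the origin recalled before the theorem. Substituting $x=0$ into Theorem \ref{T3} (with $m$ replaced by $2m$) gives
\begin{equation*}
(-1)^m=\frac{(2m+1)\sqrt{\pi}\,\Gamma(\lambda)}{2\Gamma(\lambda+1/2)}\sum_{n=0}^\infty(-1)^{m+n}\,\frac{\lambda+2m+2n}{2m+n+1}\,\frac{\bbinom{\lambda-1}{n}\bbinom{\lambda}{2m+n}\bbinom{\lambda}{m+n}}{\bbinom{2\lambda}{2m+2n}}.
\end{equation*}
Dividing by $(-1)^m$, isolating the prefactor, and reindexing through $n\mapsto n-m$ rewrites this as the first displayed identity of the theorem (note that $\lambda+2m+2n\to\lambda+2n$, $2m+n+1\to m+n+1$, $\bbinom{\lambda-1}{n}\to\bbinom{\lambda-1}{n-m}$, $\bbinom{\lambda}{2m+n}\to\bbinom{\lambda}{m+n}$, $\bbinom{\lambda}{m+n}\to\bbinom{\lambda}{n}$, and the sum starts from $n=m$).

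For the two remaining equalities I would invoke the duplication formula $(2\lambda)_{2n}=4^n(\lambda)_n(\lambda+1/2)_n$, which gives
\begin{equation*}
\frac{\bbinom{\lambda}{n}}{\bbinom{2\lambda}{2n}}=\frac{(2n)!}{4^n\,n!\,(\lambda+1/2)_n}=\frac{1}{4^n}\binom{2n}{n}\frac{1}{\bbinom{\lambda+1/2}{n}},
\end{equation*}
producing the second form. The third form is obtained by expanding the remaining $\bbinom{\cdot}{\cdot}$ into rising factorials and absorbing the factor $n!/((n-m)!(m+n)!)$ into $\binom{2n}{n-m}$.

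The derivation is essentially a substitution followed by reindexing; there is no analytic obstacle. The main care required is purely bookkeeping — tracking how each of the four parameters in the summand transforms under the shift $n\mapsto n-m$ and matching the signs $(-1)^{m+n}/(-1)^m=(-1)^{n-m}$ after the reindex.
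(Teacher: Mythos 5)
Your proposal is correct and is exactly the route the paper intends: Theorem \ref{T7} is obtained by evaluating the expansion of Theorem \ref{T3} (with Chebyshev index $2m$) at $x=0$, using $U_{2m}(0)=(-1)^m$ and $C_{2m+2n}^{(\lambda)}(0)=(-1)^{m+n}\bbinom{\lambda}{m+n}$, then reindexing $n\mapsto n-m$; the second and third forms follow from the duplication identity $(2\lambda)_{2n}=4^n(\lambda)_n(\lambda+1/2)_n$, just as you wrote. Your bookkeeping of the shifted parameters and signs checks out, so nothing further is needed.
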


\begin{theorem}\label{T8}
	Let $m$ be a nonnegative integer and $\lambda\in(-1/2,1)$ with $\lambda\ne0$. Then
	\begin{align}\label{eqth8_1}
	\frac{\Gamma(\lambda+1/2)}{\sqrt{\pi}\,\Gamma(\lambda)}&=
		\sum_{n=m}^\infty(-1)^{n-m}(\lambda+2n) \frac{ \bbinom{\lambda}{n-m}\bbinom{\lambda}{n}\bbinom{\lambda}{m+n}}{\bbinom{2\lambda}{2n}} \\
		&=
		\sum_{n=m}^\infty\frac{(-1)^{n-m}}{4^n}(\lambda+2n) \binom{2n}{n}\frac{ \bbinom{\lambda}{n-m}\bbinom{\lambda}{m+n}}{\bbinom{\lambda+1/2}{n}} \\
		&=
		\sum_{n=m}^\infty\frac{(-1)^{n-m}}{4^n\,n!}(\lambda+2n) \binom{2n}{n-m}\frac{ (\lambda)_{n-m}(\lambda)_{n+m}}{(\lambda+1/2)_{n}}
	\end{align}
\end{theorem}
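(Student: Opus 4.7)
The plan is to obtain Theorem \ref{T8} as an immediate specialization of Theorem \ref{T4} at the single point $x=0$. The values $T_{2j}(0)=(-1)^j$ and $T_{2j+1}(0)=0$, together with (as recalled just before Theorem \ref{T7}) $C_{2j+1}^{(\lambda)}(0)=0$ and $C_{2j}^{(\lambda)}(0)=(-1)^j\bbinom{\lambda}{j}$, make this evaluation completely explicit. In particular, plugging an odd $m$ into \eqref{eqT} would give only the trivial identity $0=0$, so the useful case is $m$ even; I denote that even index by $2m$, with the new $m$ becoming the index of Theorem \ref{T8}.

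Substituting $m\mapsto 2m$ and $x=0$ into \eqref{eqT} yields
\begin{equation*}
(-1)^m=\frac{\sqrt{\pi}\,\Gamma(\lambda)}{\Gamma(\lambda+1/2)}\sum_{n=0}^\infty(-1)^{m+n}(\lambda+2m+2n)\,\frac{\bbinom{\lambda}{n}\bbinom{\lambda}{2m+n}\bbinom{\lambda}{m+n}}{\bbinom{2\lambda}{2m+2n}}.
\end{equation*}
Dividing both sides by $(-1)^m$, transferring the prefactor $\sqrt{\pi}\,\Gamma(\lambda)/\Gamma(\lambda+1/2)$ to the left, and reindexing via $k=m+n$ produces the first equality of Theorem \ref{T8} (after renaming $k$ back to $n$). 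The other two forms follow by mechanical rewriting: the Gamma duplication identity $(2\lambda)_{2n}=4^n(\lambda)_n(\lambda+1/2)_n$ gives
\begin{equation*}
\frac{\bbinom{\lambda}{n}}{\bbinom{2\lambda}{2n}}=\frac{\binom{2n}{n}}{4^n\,\bbinom{\lambda+1/2}{n}},
\end{equation*}
which turns the first form into the second; then expanding the remaining Pochhammer-binomials via $\bbinom{a}{k}=(a)_k/k!$ and using the elementary equality $\binom{2n}{n}\,n!/((n-m)!(n+m)!)=\binom{2n}{n-m}/n!$ turns the second form into the third.

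I do not anticipate any real obstacle: the whole argument is routine substitution at a single point. The only analytic issue---that termwise evaluation of the series in \eqref{eqT} at $x=0$ is legitimate---is settled by the uniform convergence on compact subsets of $(-1,1)$ already proved in Theorem \ref{T4}.
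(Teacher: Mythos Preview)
Your proposal is correct and is exactly the approach the paper takes: as stated just before Theorem~\ref{T7}, Theorems~\ref{T7} and~\ref{T8} ``follow immediately'' from evaluating the expansions of Theorems~\ref{T3} and~\ref{T4} at $x=0$, using $C_{2k}^{(\lambda)}(0)=(-1)^k\bbinom{\lambda}{k}$ and $C_{2k+1}^{(\lambda)}(0)=0$. Your reindexing $k=m+n$ after the replacement $m\mapsto 2m$, and your use of the duplication identity $(2\lambda)_{2n}=4^n(\lambda)_n(\lambda+1/2)_n$ to pass between the three displayed forms, are all correct.
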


The particular case of $\lambda=1/2$ is interesting. In this case we obtain from Theorem \ref{T7} and Theorem \ref{T8} the following two identities:
\begin{align}
\frac{4}{\pi}&=
\sum_{n=m}^\infty\frac{(-1)^{n-m}(1+4n)(2m+1)}{(n+m+1)(1+2m-2n)} \frac{1}{(64)^n}\binom{2n-2m}{n-m}	\binom{2n}{n}\binom{2m+2n}{m+n},\label{eq47}\\
\frac{2}{\pi}&=
\sum_{n=m}^\infty\frac{(-1)^{n-m}(1+4n)}{(64)^n} \binom{2n-2m}{n-m}	\binom{2n}{n}\binom{2m+2n}{m+n}.\label{eq48}
\end{align}
In the case $m=0$ the equalities \eqref{eq47} and \eqref{eq48} are known. (see \cite{Levrie}, \cite{Cantarini} and \cite{chen}.)

 Because $C_n^{(1/2)}=P_n$ is the Legendre polynomial of degree $n$, setting $\lambda=1/2$ in both Theorem \ref{T3} and Theorem \ref{T4} yields the next results.

\begin{theorem}\label{T9}
	Let $m$ be a nonnegative integer. Then for all $x\in[-1,1]$ the following holds
	\begin{equation*}\label{T9_1}
		\sqrt{1-x^2} U_m(x)=
		\frac{(m+1)\pi}{4^{m+1}}
		\sum_{n=0}^\infty\frac{1+2m+4n}{(1-2n)(n+m+1)16^n} \binom{2n}{n}\binom{2n+2m}{n+m}
		P_{m+2n}(x)
	\end{equation*}
	Moreover, the convergence is uniform.
\end{theorem}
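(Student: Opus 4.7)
The plan is to derive Theorem \ref{T9} as the specialization $\lambda=1/2$ of Theorem \ref{T3}, and then upgrade the convergence claim from ``uniform on compacts of $(-1,1)$'' to ``uniform on $[-1,1]$''.

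First, I would rewrite the coefficients that appear in Theorem \ref{T3} at $\lambda=1/2$. The prefactor $\frac{(m+1)\sqrt\pi\,\Gamma(1/2)}{2\Gamma(1)}$ collapses to $\frac{(m+1)\pi}{2}$. For the generalized binomials I would use the identities
\begin{equation*}
\bbinom{1/2}{k}=\frac{1}{4^k}\binom{2k}{k},\qquad \bbinom{-1/2}{n}=\frac{1}{(1-2n)4^n}\binom{2n}{n},\qquad \bbinom{1}{k}=1,
\end{equation*}
where the second identity is obtained from $(-1/2)_n=\frac{1}{1-2n}(1/2)_n$ (valid for all $n\ge 0$). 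Applying $C^{(1/2)}_k=P_k$ and combining the $4^n$, $4^{m+n}$ factors into a single $4^m\cdot 16^n$ yields exactly the statement of Theorem \ref{T9}; this handles the pointwise identity on $(-1,1)$ (with uniform convergence on compacts) directly from Theorem \ref{T3}.

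The remaining point, and the only real work, is extending the uniform convergence to the closed interval $[-1,1]$. Here I would appeal to the Weierstrass M-test together with the classical bound $|P_k(x)|\le 1$ on $[-1,1]$. By Stirling's formula,
\begin{equation*}
\frac{1}{16^n}\binom{2n}{n}\binom{2m+2n}{m+n}\sim\frac{4^m}{\pi\sqrt{n(m+n)}}=O\!\left(\frac{1}{n}\right),
\end{equation*}
while $\dfrac{1+2m+4n}{(1-2n)(n+m+1)}=O(1/n)$ as $n\to\infty$. Hence the general term of the series is bounded in absolute value by $C_m/n^2$, independently of $x\in[-1,1]$. The series therefore converges absolutely and uniformly on $[-1,1]$.

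Finally I would note that both sides are continuous on $[-1,1]$ (the left-hand side obviously, the right-hand side because it is a uniform limit of polynomials), and they coincide on $(-1,1)$ by Theorem \ref{T3}, hence they coincide on the whole closed interval. The only mild obstacle is the asymptotic verification—one must be careful that the apparently singular factor $1/(1-2n)$ at $n=0$ is harmless (it only affects the first term) and that the cancellation of growth between numerator and denominator genuinely produces $O(1/n^2)$ terms, but this is a routine application of Stirling.
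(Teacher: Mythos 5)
Your proposal is correct and follows essentially the same route as the paper: specialize Theorem \ref{T3} at $\lambda=1/2$ (with the same Pochhammer-to-central-binomial simplifications), then use $\sup_{[-1,1]}|P_{m+2n}|=1$ together with the $O(1/n^2)$ decay of the coefficients to get normal convergence on $[-1,1]$ and extend the equality to the endpoints by continuity. The paper states this convergence upgrade in one line after the theorem; your M-test/Stirling verification is just the explicit version of that remark.
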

Note that, because $\sup_{[-1,1]}\abs{P_{m+2n}(x)}=1$, the series in theorem \ref{T9} is normally convergent on the interval $[-1,1]$, and equality holds on $[-1,1]$.

\bg
Similarly, 
\begin{theorem}\label{T10}
	Let $m$ be a nonnegative integer. Then for all $x\in(-1,1)$ the following holds
	\begin{equation*}\label{T10_1}
		 \frac{T_m(x)}{\sqrt{1-x^2}}=
		\frac{\pi}{ 2^{2m+1}}
		\sum_{n=0}^\infty\frac{1+2m+4n}{16^n} \binom{2n}{n}\binom{2n+2m}{n+m}
		P_{m+2n}(x)
	\end{equation*}
	Moreover, the convergence is uniform on every compact contained in $(-1,1)$.
\end{theorem}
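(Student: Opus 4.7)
The plan is to obtain Theorem \ref{T10} as a direct specialization of Theorem \ref{T4} at $\lambda=1/2$. Since $C_n^{(1/2)}=P_n$ and $(1-x^2)^{1/2}=\sqrt{1-x^2}$, plugging $\lambda=1/2$ into the expansion of Theorem \ref{T4} gives exactly the left-hand side $T_m(x)/\sqrt{1-x^2}$, and the convergence claim (uniform on compact subsets of $(-1,1)$) is inherited from the same theorem.

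The only substantive work is simplifying the prefactor and the general term. First, I would note that
\[
\frac{\sqrt{\pi}\,\Gamma(1/2)}{\Gamma(1)}=\pi,\qquad \lambda+m+2n\Big|_{\lambda=1/2}=\frac{1+2m+4n}{2}.
\]
Next, using the standard identity
\[
\bbinom{1/2}{k}=\frac{(1/2)_k}{k!}=\frac{1}{4^k}\binom{2k}{k},\qquad \bbinom{1}{k}=\frac{k!}{k!}=1,
\]
I would compute
\[
\frac{\bbinom{1/2}{n}\bbinom{1/2}{m+n}}{\bbinom{1}{m+2n}}
=\frac{1}{4^{m+2n}}\binom{2n}{n}\binom{2m+2n}{m+n}.
\]
Multiplying by $\pi\cdot\frac{1+2m+4n}{2}$ produces $\frac{\pi}{2\cdot 4^m}\cdot\frac{1+2m+4n}{16^n}\binom{2n}{n}\binom{2m+2n}{m+n}=\frac{\pi}{2^{2m+1}}\cdot\frac{1+2m+4n}{16^n}\binom{2n}{n}\binom{2m+2n}{m+n}$, which matches the coefficient in the statement.

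There is essentially no obstacle here: Theorem \ref{T4} was stated for $\lambda\in(-1/2,1)\setminus\{0\}$, a range which clearly contains $1/2$, so all hypotheses are satisfied without any limiting argument. The uniform convergence on compact subsets of $(-1,1)$ follows verbatim from Theorem \ref{T4}; unlike Theorem \ref{T9}, one cannot extend convergence to the endpoints $x=\pm 1$ because $T_m(\pm 1)/\sqrt{1-x^2}$ blows up there, so the statement is already sharp as given.
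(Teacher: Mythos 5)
Your proposal is correct and is exactly the route the paper takes: Theorem \ref{T10} is obtained by specializing Theorem \ref{T4} to $\lambda=1/2$, using $C_n^{(1/2)}=P_n$, $\bbinom{1/2}{k}=4^{-k}\binom{2k}{k}$, $\bbinom{1}{k}=1$, and $\sqrt{\pi}\,\Gamma(1/2)/\Gamma(1)=\pi$, with the uniform convergence on compact subsets of $(-1,1)$ inherited directly from Theorem \ref{T4}. Your simplification of the coefficient to $\frac{\pi}{2^{2m+1}}\cdot\frac{1+2m+4n}{16^n}\binom{2n}{n}\binom{2m+2n}{m+n}$ is accurate, so nothing is missing.
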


As a corollary from the above Theorems \ref{T9} and \ref{T10} we have the following result which corresponds to the case $m=0$.
\begin{corollary}\label{co11}
	For all $x\in(-1,1)$ the following holds
\begin{align*}\label{cor11_1}
		\sqrt{1-x^2}&=
\frac{\pi}{4}
\sum_{n=0}^\infty\frac{1+4n}{(1-2n)(n+1)16^n} \binom{2n}{n}^2
P_{2n}(x)\\
 \frac{1}{\sqrt{1-x^2}}&=
\frac{\pi}{ 2}
\sum_{n=0}^\infty\frac{1+4n}{16^n} \binom{2n}{n}^2
P_{2n}(x)
\end{align*}
Moreover, the convergence is uniform on every compact contained in $(-1,1)$.
\end{corollary}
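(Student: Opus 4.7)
The proof reduces to specializing Theorem \ref{T9} and Theorem \ref{T10} to the case $m=0$, so the plan is essentially bookkeeping. I will treat the two identities in parallel.

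First, I would set $m=0$ in Theorem \ref{T9}. Since $U_0(x)=1$, the left-hand side becomes $\sqrt{1-x^2}$. For the right-hand side, the prefactor $(m+1)\pi/4^{m+1}$ reduces to $\pi/4$, and the product of binomials $\binom{2n}{n}\binom{2n+2m}{n+m}$ collapses to $\binom{2n}{n}^2$, yielding the first identity of Corollary \ref{co11} term by term. The convergence is automatic: Theorem \ref{T9}'s remark gives normal convergence on $[-1,1]$ because $\sup_{[-1,1]}|P_{2n}(x)|=1$, which is stronger than the stated uniform convergence on compact subsets of $(-1,1)$.

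Next, I would set $m=0$ in Theorem \ref{T10}. Since $T_0(x)=1$, the left-hand side becomes $1/\sqrt{1-x^2}$. The prefactor $\pi/2^{2m+1}$ reduces to $\pi/2$ and, as before, $\binom{2n+2m}{n+m}$ at $m=0$ gives the second factor of $\binom{2n}{n}$. This produces the second identity directly, and the uniform convergence on every compact subset of $(-1,1)$ is inherited verbatim from Theorem \ref{T10}.

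There is no real obstacle here—all the analytic work (integral evaluations, Szegő/Zygmund-based convergence, identification of the Gegenbauer coefficients) has already been done in Theorems \ref{T3}, \ref{T4}, \ref{T9}, \ref{T10}. The only mild point worth flagging explicitly in the write-up is the simultaneous passage $C_n^{(1/2)}=P_n$ together with the identifications $U_0=T_0=1$, so that the reader sees why the double binomial coefficient $\binom{2n}{n}\binom{2n+2m}{n+m}$ specializes cleanly to $\binom{2n}{n}^2$ and why the two prefactors $\pi/4$ and $\pi/2$ appear. Thus the proof is a one-line invocation: ``Apply Theorem \ref{T9} (resp.\ Theorem \ref{T10}) with $m=0$ and use $U_0\equiv T_0\equiv 1$.''
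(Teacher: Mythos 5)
Your proposal is correct and coincides with the paper's own argument: the corollary is obtained exactly by setting $m=0$ in Theorems \ref{T9} and \ref{T10}, using $U_0\equiv T_0\equiv 1$ so that the prefactors become $\pi/4$ and $\pi/2$ and $\binom{2n}{n}\binom{2n+2m}{n+m}$ collapses to $\binom{2n}{n}^2$, with the convergence statements inherited directly. Nothing further is needed.
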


These formulas in Corollary \ref{co11} appeared several times in the literature. With exception of the statement about convergence, they can be found in \cite{Levrie}, \cite{Cantarini} and more recently in \cite{chen}.

The next result follows from Theorem \ref{T6} when some interesting values for $\lambda$ and $m$ are selected.

\begin{corollary}\label{T12} For all complex numbers $z$ with $\Re(z)>-\frac{1}{2}$ and  $z\notin\{\frac{1}{2}+k:k\in\mathbb{Z}\}$ the following holds
\begin{equation}\label{eqT12_a}
	\sum_{n=0}^\infty\frac{1}{4^n}\binom{2n}{n}\frac{z-2n}{\binom{z-1/2}{n}}\binom{z}{n}^3
	=\frac{\tan(\pi z)}{\pi}.
\end{equation}
In particular, for all nonnegative  integers $m$ we have
\begin{equation}\label{eqT12_b}
	\sum_{n=0}^\infty\frac{1}{4^{n+1}}\binom{2n}{n}\frac{4m+1-8n}{\binom{m-1/4}{n}}\binom{m+1/4}{n}^3
	=\frac{1}{\pi}.
\end{equation}
\end{corollary}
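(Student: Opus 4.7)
The plan is to derive \eqref{eqT12_a} from the $m=0$ case of Theorem \ref{T6}(a) via a change of variable and the classical pair-grouping identity for binomial coefficients, and then to extend the resulting identity by analytic continuation to the full half-plane. The special case \eqref{eqT12_b} will then follow by specialization.

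Setting $m=0$ in \eqref{eqT6_a} makes the bracket on the right-hand side equal to $2$, yielding
$$
\sum_{n=0}^\infty (\lambda+2n)\frac{\bbinom{\lambda}{n}^4}{\bbinom{2\lambda}{2n}}=\frac{\tan(\pi\lambda)}{\pi}, \qquad \lambda\in(-1/2,1/2).
$$
The next step is to convert this into the descending-binomial form of \eqref{eqT12_a}. Replacing $\lambda$ by $-z$ and using $\bbinom{-z}{n}=(-1)^n\binom{z}{n}$, $\bbinom{-2z}{2n}=\binom{2z}{2n}$, and $\tan(-\pi z)=-\tan(\pi z)$ gives
$$
\sum_{n=0}^\infty (z-2n)\frac{\binom{z}{n}^4}{\binom{2z}{2n}}=\frac{\tan(\pi z)}{\pi}, \qquad z\in(-1/2,1/2).
$$
The factor $\binom{2z}{2n}$ is then simplified by pairing the consecutive factors of its descending factorial via $(2z-2j)(2z-2j-1)=4(z-j)(z-j-1/2)$ for $j=0,\ldots,n-1$, producing the duplication identity
$$
\binom{2z}{2n}=\frac{4^n}{\binom{2n}{n}}\binom{z}{n}\binom{z-1/2}{n},
$$
which converts the previous display into exactly \eqref{eqT12_a} for real $z\in(-1/2,1/2)$.

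To extend \eqref{eqT12_a} to complex $z$ with $\Re(z)>-1/2$ and $z\notin 1/2+\mathbb{Z}$, the strategy is an analytic continuation argument analogous to the one used at the end of the proof of Theorem \ref{T6}(c). Each term of the series is meromorphic in $z$, and in the half-plane $\Re(z)>-1/2$ the only possible singularities come from the zeros of $\binom{z-1/2}{n}$, which lie in $1/2+\mathbb{Z}$. Using the Gauss-product form of $\Gamma$ one has $\binom{z}{n}\sim (-1)^n n^{-z-1}/\Gamma(-z)$ and $\binom{z-1/2}{n}\sim (-1)^n n^{-z-1/2}/\Gamma(1/2-z)$, together with $\binom{2n}{n}/4^n\sim (\pi n)^{-1/2}$, so the $n$-th term is $O(n^{-2\Re(z)-2})$ as $n\to\infty$. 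This forces absolute and locally uniform convergence on $\{\Re(z)>-1/2\}\setminus(1/2+\mathbb{Z})$, so the left-hand side defines an analytic function there; since it agrees with the analytic function $\tan(\pi z)/\pi$ on the real interval $(-1/2,1/2)$, the identity theorem extends the equality throughout.

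Finally, \eqref{eqT12_b} is obtained by specializing $z=m+\tfrac{1}{4}$: here $\tan(\pi z)/\pi=\tan(\pi/4)/\pi=1/\pi$, while $z-2n=(4m+1-8n)/4$ combines with the $1/4^n$ in front to produce $1/4^{n+1}$. The main technical obstacle is the asymptotic estimate of the general term that underlies the analytic continuation; the rest is essentially bookkeeping between the two binomial-coefficient conventions.
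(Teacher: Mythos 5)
Your proposal is correct and follows essentially the same route as the paper: the identity \eqref{eqT12_a} is obtained from \eqref{eqT6_a} with $m=0$ via the substitution $z=-\lambda$ and extended by analytic continuation, the specialization $z=m+\tfrac14$ then giving \eqref{eqT12_b}. You merely make explicit two steps the paper leaves implicit, namely the conversion $\bbinom{-2z}{2n}=\binom{2z}{2n}=4^n\binom{z}{n}\binom{z-1/2}{n}/\binom{2n}{n}$ and the $O(n^{-2\Re(z)-2})$ estimate guaranteeing that the series defines an analytic function on the stated domain.
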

\begin{proof}
Indeed, both sides of the given equality are analytic in the domain \begin{equation*}\Omega=\left\{z\in\mathbb{C}:\Re(z)>\frac{1}{2},z-\frac{1}{2}\notin\mathbb{Z}\right\},
\end{equation*}
and according to \eqref{eqT6_a} with $m=0$ both sides are equal when $z=-\lambda\in (-1/2,1/2)$ so they are equal in $\Omega$. 
\end{proof}

\begin{remark}
	In a similar way as in Corollary \ref {T12}, analytic extensions  to a suitable complex domain of all the results in Theorem \ref{T6}, Theorem \ref{T7} and Theorem \ref{T8}, can be obtained. We leave this task to the interested reader.
\end{remark}

Using  \eqref{eqT6_b} with $\lambda=1/2$, we obtain the next corollary.

\begin{corollary}\label{T13}
	For all nonnegative integers $m$ we have
\begin{equation*}\label{eq13_1}
	\sum_{n=0}^\infty\frac{1+2m+4n}{(m+n+1)^2(2n-1)^2}\frac{1}{(256)^{n}}\binom{2n}{n}^2
	\binom{2m+2n}{m+n}^2
	=
	\frac{2^{4m+5}}{\pi^2(2m+1)(2m+3)}.
\end{equation*} 
\end{corollary}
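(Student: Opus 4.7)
The plan is to specialize formula \eqref{eqT6_b} of Theorem \ref{T6} at $\lambda=1/2$. Since $\lambda=1/2$ is a removable singularity on the right side (the factor $(1-2\lambda)$ vanishes while $\tan(\pi\lambda)$ has a simple pole), the first task is to evaluate the limit. Writing $\lambda=1/2+\epsilon$, one has $\tan(\pi\lambda)=-\cot(\pi\epsilon)=-\frac{1}{\pi\epsilon}+O(\epsilon)$, so $(1-2\lambda)\tan(\pi\lambda)=\frac{2}{\pi}+O(\epsilon^2)$. Dividing by $\pi(1-\lambda)(m+1)^2$ at $\lambda=1/2$ yields $\frac{4}{\pi^2(m+1)^2}$.

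Next I would simplify the bracket $1-\frac{(\lambda-1)_{m+1}}{(2-\lambda)_{m+1}}$ at $\lambda=1/2$. Canceling the common central factors from the numerator $(-1/2)_{m+1}=(-\tfrac12)(\tfrac12)(\tfrac32)\cdots(m-\tfrac12)$ and the denominator $(3/2)_{m+1}=(\tfrac32)(\tfrac52)\cdots(m+\tfrac32)$ leaves $\frac{(-1/2)(1/2)}{(m+1/2)(m+3/2)}=-\frac{1}{(2m+1)(2m+3)}$. Hence $1-\frac{(-1/2)_{m+1}}{(3/2)_{m+1}}=\frac{(2m+1)(2m+3)+1}{(2m+1)(2m+3)}=\frac{4(m+1)^2}{(2m+1)(2m+3)}$, and the right side of \eqref{eqT6_b} collapses to $\frac{16}{\pi^2(2m+1)(2m+3)}$.

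On the left side, I will translate the generalized binomial coefficients at $\lambda=1/2$ into ordinary ones via the identities $\bbinom{1/2}{k}=\frac{1}{4^k}\binom{2k}{k}$, $\bbinom{-1/2}{n}=\frac{-1}{2n-1}\cdot\frac{1}{4^n}\binom{2n}{n}$ (which follow from $(1/2)_k=(2k)!/(4^kk!)$ and $(-1/2)_n=-(1/2)_n/(2n-1)$), and $\bbinom{1}{m+2n}=1$. After squaring, the summand becomes
\begin{equation*}
\frac{1/2+m+2n}{(m+n+1)^2}\cdot\frac{\binom{2n}{n}^2}{(2n-1)^2\,16^n}\cdot\frac{\binom{2m+2n}{m+n}^2}{16^{m+n}},
\end{equation*}
which factors as $\frac{1}{2\cdot 16^m}$ times the summand appearing in the target identity.

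Equating the two sides of \eqref{eqT6_b} and multiplying through by $2\cdot 16^m$ then gives the stated sum $\frac{32\cdot 16^m}{\pi^2(2m+1)(2m+3)}=\frac{2^{4m+5}}{\pi^2(2m+1)(2m+3)}$. The only nontrivial step is the limit/continuous-extension at $\lambda=1/2$; every other manipulation is a routine rewriting of Pochhammer symbols. I do not anticipate any real obstacle beyond bookkeeping of powers of $2$.
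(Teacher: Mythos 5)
Your proposal is correct and follows exactly the route the paper takes: the paper simply states that Corollary \ref{T13} is obtained from \eqref{eqT6_b} with $\lambda=1/2$, and your computations (the limit $(1-2\lambda)\tan(\pi\lambda)\to 2/\pi$, the simplification $1-\frac{(-1/2)_{m+1}}{(3/2)_{m+1}}=\frac{4(m+1)^2}{(2m+1)(2m+3)}$, and the Pochhammer-to-central-binomial conversions) correctly supply the bookkeeping the paper leaves implicit.
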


In particular, when $m=0$ we get 
\begin{equation}\label{eq411}
	\sum_{n=0}^\infty\frac{1+4n}{(n+1)^2(2n-1)^2}\frac{1}{(256)^{n}}\binom{2n}{n}^4=
\frac{32}{3\pi^2}.
\end{equation}
This is given in \cite{Levrie},\cite{Cantarini} and \cite{chen}. 

\begin{corollary}\label{T14}
	For all positive integers $m$ we have
	\begin{equation*}\label{eq14_1}
	\sum_{n=0}^\infty\frac{1+2m+4n}{(m+2n)(m+1+2n)}\frac{1}{(256)^n}
\binom{2n}{n}^2\binom{2n+2m}{n+m}^2\\
=\frac{2^{4m+1}}{\pi^2}\cdot\frac{2H_{2m}-H_m}{m^2}
\end{equation*} 
where $H_k=\sum_{j=1}^k1/j$ is the $k$th harmonic number.
\end{corollary}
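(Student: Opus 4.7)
The plan is to obtain Corollary~\ref{T14} by evaluating part \textbf{(b)} of Theorem~\ref{T6} at $\lambda = 3/2$, where the right-hand side of \eqref{eqT6_b} has a removable singularity handled by continuity. Since Corollary~\ref{T14} carries the factor $m^2$ in the denominator (rather than the $(m+1)^2$ that comes from \eqref{eqT6_b} directly), I would first perform the substitution $m \mapsto m-1$ in \eqref{eqT6_b}; this is legitimate for the positive integers $m \geq 1$ of the corollary.

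After this relabelling, the right-hand side becomes $\frac{(1-2\lambda)\tan(\pi\lambda)}{m^2\pi(1-\lambda)}\bigl(1-\frac{(\lambda-1)_m}{(2-\lambda)_m}\bigr)$. To extract its value at $\lambda = 3/2$ I would set $\lambda = 3/2 + \epsilon$ and use two Taylor expansions:
\begin{equation*}
\tan(\pi\lambda) = -\cot(\pi\epsilon) = -\tfrac{1}{\pi\epsilon} + O(\epsilon), \qquad \frac{(\lambda-1)_m}{(2-\lambda)_m} = 1 + 2(2H_{2m}-H_m)\,\epsilon + O(\epsilon^2).
\end{equation*}
The second expansion follows from the logarithmic derivative $\frac{d}{d\lambda}\log\frac{(\lambda-1)_m}{(2-\lambda)_m}\big|_{\lambda=3/2} = 2\sum_{j=0}^{m-1}\frac{1}{j+1/2} = 4\sum_{j=0}^{m-1}\frac{1}{2j+1} = 2(2H_{2m}-H_m)$, where the last equality uses the elementary identity $\sum_{j=0}^{m-1}\frac{1}{2j+1} = H_{2m}-\frac{1}{2}H_m$. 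Multiplying the two pieces and simplifying, the right-hand side tends to the finite limit $\frac{8(2H_{2m}-H_m)}{\pi^2 m^2}$ as $\lambda \to 3/2$.

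Simplifying the left-hand side at $\lambda = 3/2$ is routine using the closed forms $\bbinom{1/2}{n} = \binom{2n}{n}/4^n$, $\bbinom{3/2}{k} = (2k+1)\binom{2k}{k}/4^k$, and $\bbinom{3}{k} = (k+1)(k+2)/2$. Inserting these values and applying the reduction $\binom{2k-2}{k-1} = \frac{k}{2(2k-1)}\binom{2k}{k}$ with $k = m+n$ converts $\binom{2m+2n-2}{m+n-1}^2$ into $\binom{2m+2n}{m+n}^2$ and causes the factors $(2m+2n-1)^2$ and $(m+n)^2$ to cancel. The left-hand side collapses to $\frac{4}{16^m}\sum_{n=0}^\infty \frac{1+2m+4n}{(m+2n)(m+2n+1)\,256^n}\binom{2n}{n}^2\binom{2m+2n}{m+n}^2$; equating this with $\frac{8(2H_{2m}-H_m)}{\pi^2 m^2}$ and solving for the sum produces exactly the identity claimed, noting that $2 \cdot 16^m = 2^{4m+1}$.

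The main obstacle is the $0 \cdot \infty$ indeterminacy at $\lambda = 3/2$: the simple pole of $\tan(\pi\lambda)$ must be balanced against the simple zero of $1 - (\lambda-1)_m/(2-\lambda)_m$, and it is precisely in the first-order Taylor coefficient of the latter that the combination $2H_{2m}-H_m$ emerges. A minor side remark is that the series on the left actually converges at $\lambda = 3/2$, since the general term is $O(1/n^3)$, which is immediate from the asymptotics of the Pochhammer ratios involved. Without the preliminary shift $m \mapsto m-1$ one would instead obtain a companion identity with $(m+1)^2$ and $2H_{2m+2}-H_{m+1}$ in place of $m^2$ and $2H_{2m}-H_m$.
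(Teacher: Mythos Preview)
Your proposal is correct and follows essentially the same route as the paper: apply \eqref{eqT6_b} with $m\mapsto m-1$ and let $\lambda\to 3/2$, resolving the $0\cdot\infty$ indeterminacy via a first-order expansion that produces the factor $2H_{2m}-H_m$. The paper expands $(1/2+\epsilon)_m$ directly while you use the logarithmic derivative of $(\lambda-1)_m/(2-\lambda)_m$, but these are the same computation; you also spell out the LHS simplification and the $O(1/n^3)$ convergence at $\lambda=3/2$, which the paper leaves implicit.
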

\begin{proof}
	According to \eqref{eqT6_b} with $\lambda=3/2$ and $m$ replaced by $m-1$ we have
\begin{equation*}
\sum_{n=0}^\infty\frac{1+2m+4n}{(m+2n)(m+1+2n)}\frac{1}{(256)^n}
\binom{2n}{n}^2\binom{2n+2m}{n+m}^2\\
	=\frac{16^m}{\pi m^2}\lim_{\epsilon\to0}\Theta(\epsilon)
\end{equation*} 
with
\begin{equation*}
	\Theta(\epsilon)=\left(\frac{(1/2+\epsilon)_{m}}{(1/2-\epsilon)_{m}}-1\right)\cot(\pi\epsilon)
\end{equation*}
But
\begin{align*}
	\left(\frac12+\epsilon\right)_m&=\left(\frac12\right)_m\prod_{j=1}^m\left(1+\frac{2}{2j-1}\epsilon\right)\\
	&=\left(\frac12\right)_m\left(1+\Big( \sum_{j=1}^m\frac{2}{2j-1}\Big)\,\epsilon+O(\epsilon^2)\right)\\
	&=\left(\frac12\right)_m\left(1+ (2H_{2m}-H_m)\epsilon+O(\epsilon^2)\right).
\end{align*}	
Thus $\Theta(\epsilon)=2(2H_{2m}-H_m)/\pi+O(\epsilon)$ and the required conclusion follows.
\end{proof}	

In particular, with $m=1$ we obtain the following companion formula to \eqref{eq411}:
\begin{equation}\label{eq412}
	\sum_{n=0}^\infty\frac{(3+4n)(1+2n)}{(n+1)^3}\frac{1}{(256)^{n}}\binom{2n}{n}^4=
	\frac{32}{\pi^2}.
\end{equation}

In a similar sprit. Using \eqref{eqT6_c} with $\lambda=1/2$ we obtain the next corollary

\begin{corollary}\label{T15}
	For all nonnegtive integers $m$ and $q$ we have
\begin{align*}\label{eq15_1}
	\sum_{n=m}^\infty\frac{1+2q+4n}{(s+1+n)(2m+1-2n)}\frac{1}{(256)^n} \binom{2n-2m}{n-m}&\binom{2n}{n}\binom{2q+2n}{q+n}\binom{2s+2n}{s+n}\nonumber\\
&=\frac{2^{4q+3}}{\pi^2(2s+1)(2m+1)}
	\end{align*} 
where $s=q+m$.
\end{corollary}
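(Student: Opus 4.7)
The plan is to specialize Theorem~\ref{T6}(c) to $\lambda=\tfrac12$ and read off the identity directly. This case is allowed because the theorem states that the right side is defined by continuity at $\lambda=\tfrac12$, so nothing new needs to be proved about convergence.

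First I would rewrite every quantity $\bbinom{\lambda}{\cdot}$, $\bbinom{\lambda-1}{\cdot}$, and $\bbinom{2\lambda}{\cdot}$ in the summand at $\lambda=\tfrac12$ using the standard reductions
\begin{equation*}
\bbinom{1/2}{k}=\frac{1}{4^k}\binom{2k}{k},\qquad
\bbinom{-1/2}{k}=\frac{1}{(1-2k)\,4^k}\binom{2k}{k},\qquad
\bbinom{1}{k}=1.
\end{equation*}
Applied to the four factors $\bbinom{\lambda-1}{n-m}$, $\bbinom{\lambda}{n}$, $\bbinom{\lambda}{q+n}$, $\bbinom{\lambda}{q+m+n}$ and to the denominator $\bbinom{2\lambda}{q+2n}=\bbinom{1}{q+2n}=1$, the accumulated powers of $4$ collapse to $4^{-(4n+2q)}=16^{-q}\cdot 256^{-n}$, the factor $(2m+1-2n)$ comes out of $\bbinom{-1/2}{n-m}$, and $\frac{\lambda+q+2n}{q+m+1+n}=\frac{1+2q+4n}{2(s+1+n)}$. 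Collecting these, the left side of Theorem~\ref{T6}(c) becomes $\frac{1}{2\cdot 16^{q}}$ times the series appearing in the corollary.

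Next I would evaluate the right side of Theorem~\ref{T6}(c) at $\lambda=\tfrac12$ by a one-variable L'Hôpital argument. Writing $\lambda=\tfrac12+\varepsilon$ one has $(1-2\lambda)\tan(\pi\lambda)=-2\varepsilon\cdot(-\cot(\pi\varepsilon))\to \tfrac{2}{\pi}$, so the prefactor tends to $\frac{2}{\pi^{2}(q+2m+1)}$. The bracketed term simplifies through $(1/2)_k/(3/2)_k=1/(2k+1)$ to
\begin{equation*}
\frac{1}{2(m+q)+1}+\frac{1}{2m+1}=\frac{1}{2s+1}+\frac{1}{2m+1}=\frac{2(q+2m+1)}{(2s+1)(2m+1)},
\end{equation*}
so the right side of Theorem~\ref{T6}(c) equals $\frac{4}{\pi^{2}(2s+1)(2m+1)}$.

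Finally, equating the two reductions yields
\begin{equation*}
\sum_{n=m}^{\infty}(\cdots)=\frac{8\cdot 16^{q}}{\pi^{2}(2s+1)(2m+1)}=\frac{2^{4q+3}}{\pi^{2}(2s+1)(2m+1)},
\end{equation*}
which is the claimed identity. There is essentially no obstacle here — the work is all routine simplification, and the only mildly delicate step is the limit on the right side, which is justified by the continuity clause in Theorem~\ref{T6}(c). The main bookkeeping care needed is the cancellation of the prefactor $q+2m+1$ against the one produced by the bracketed sum.
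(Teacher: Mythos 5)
Your proposal is correct and is exactly the paper's route: the corollary is stated there as the specialization of \eqref{eqT6_c} to $\lambda=1/2$, and your reductions of the rising-factorial binomials, the collection of powers of $4$ into $16^{-q}\,256^{-n}$, and the limit $(1-2\lambda)\tan(\pi\lambda)\to 2/\pi$ with $(1/2)_k/(3/2)_k=1/(2k+1)$ supply precisely the routine bookkeeping the paper leaves implicit. No gaps.
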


In particular, with $m=0$, we get for arbitrary nonnegative integer $q$:
	\begin{equation*}
	\sum_{n=0}^\infty\frac{(1+2q+4n)(2q+1)}{(q+1+n)(1-2n)}\frac{1}{(256)^n} \binom{2n}{n}^2\binom{2q+2n}{q+n}^2=\frac{2^{4q+3}}{\pi^2},
\end{equation*} 
and with $q=0$, we get for arbitrary nonnegative integer $m$:
\begin{equation*}
	\sum_{n=m}^\infty\frac{(1+4n)(2m+1)^2}{(m+1+n)(2m+1-2n)}\frac{1}{(256)^n} \binom{2n-2m}{n-m}\binom{2n}{n}^2\binom{2m+2n}{m+n}=\frac{8}{\pi^2},
\end{equation*} 
and finally, with $m=q=0$ we get
\begin{equation*}
	\sum_{n=0}^\infty\frac{1+4n}{(1+n)(1-2n)}\frac{1}{(256)^n} \binom{2n}{n}^4=\frac{8}{\pi^2}.
\end{equation*} 

\section{Concluding Remarks}\label{sec5}

Many of our results give exact evaluation of some generalized hypergeometric functions at $1$ or $-1$. For example, using the standard notation for hypergeometric functions (\cite[16.1]{Nist}) equation \eqref{eqT12_a} is equivalent to
\begin{equation*}
	{}_5F_4\left({
		\frac{1}{2},-z,-z,-z,1-\frac{z}{2}\atop
		1,1,-\frac{z}{2},\frac{1}{2}-z
	};1\right)=\frac{\tan(\pi z)}{\pi z}.
\end{equation*}
To the author's knowledge this is new. In the same spirit, Theorem \ref{T8} shows that
\begin{equation*}
	{}_4F_3\left({
		z,\frac{1}{2}+m,1+m+\frac{z}{2},2m+z\atop
		1+2m,\frac{1}{2}+m+z,m+\frac{z}{2}
	};-1\right)=	\frac{4^m\,m!\,\Gamma(\frac{1}{2}+m+z)}{\sqrt{\pi}\Gamma(1+2m+z)}
\end{equation*}	
for every nonnegative integer $m$.

\end{document}